\makeatletter \@addtoreset{equation}{section}
\theoremstyle{definition}
\newtheorem{thm}{Theorem}[section]
\newtheorem{lem}{Lemma}[section]
\newtheorem{pro}{Proposition}[section]
\newtheorem{cor}{Corollary}[section]
\newtheorem{rem}{Remark}[section]
\newcommand{\B}{\mathcal{B}}
\newcommand{\C}{\mathbb{C}}
\newcommand{\R}{\mathbb{R}}
\title{ Commuting maps   with the Mean Transform under Jordan product}
\author{F. Chabbabi }
\address{Department of Mathematics, FS, Abdelmalek Essaadi University, Tetouan, Morocco}
\email{f.chabbabi@uae.ac.ma}
\subjclass[2010]{47A05, 47A10, 47B49, 46L40}
\keywords{ normal,  quasi-normal operators, polar decomposition, mean transform, Jordan product}
\date{}
\begin{document}
\maketitle

\begin{abstract}

 In this article, we give a complete characterization of the bijective maps   which  commute with the mean transform under Jordan product. The main result is the following : 
 Let $H,K$ be two complex Hilbert spaces  and  $\Phi :\ B(H) \to \B(K)$ be a bijective map, then 
   $$ \mathcal {M}(\Phi(A)\circ\Phi(B))=\Phi(\mathcal{M}(A\circ B)) \;\; \text{for all}\;\; A, B \in \B(H)$$
   if and only if there exists a unitary or anti-unitary  operator $U:H\to K $ such that, 
   $$  \Phi(T)= UTU^* \; \text{for all} \;T\in \B(H).$$
\end{abstract}

\section{Introduction} 
   Let $H$ and $K$ be  two complex Hilbert spaces  and let  $\mathcal{B}(H, K)$ be the Banach space of all bounded linear operators from $H$ into $K$. In the case
  $K=H$, $\mathcal{B}(H,H)$  is simply denoted by  $\B(H) $ and it  is a  Banach algebra.
 
  For an arbitrary operator $T\in \B(H, K)$, we denote by $\mathcal{R}(T)$, $\mathcal{N}(T)$ and  $T^*$  the range,  the null subspace and   the adjoint operator  of $T$,   respectively. 
  For $T\in \B(H)$, the spectrum of $T$ is denoted by $\sigma(T)$.
 
 An operator $T\in \mathcal{B}(H,K)$ is  a {\it partial isometry} when $T^*T$ is an orthogonal projection (or, equivalently $TT^*T = T$). 
  In particular $T$ is an {\it isometry} if $T^*T=I$, and $T$ is {\it unitary} if it is a surjective isometry.  An operator $T\in \B(H)$ is said to be normal if $T^*T=TT^*$, and quasi-normal if $TT^*T=T^*TT$.
 As usual,  we denote the module of $T \in \B(H)$ by $|T|=(T^*T)^{1/2}$, and $T=V|T|$ is   the unique polar decomposition  of $T$, where $V$ is a partial isometry satisfying  $\mathcal{N}(V)=\mathcal{N}(T)$. In general $V$ and $|T|$ does not commute and it is the case if $T$ is quasi-normal.   
 
From the polar decomposition, the Aluthge transform is defined in (\cite{alu, oku}), by 
$$\Delta(T)=|T|^{\frac{1}{2}}V|T|^{\frac{1}{2}}.$$
The Aluthge transform has been well  studied by many authors, it is a good tool for studying sum class of  operators (see \cite{cha, fm, kp3,kp2,yam}). In the same way, the mean transform of the operator $T$ was  introduced  in \cite{LLL, hyy}, by 
$$\mathcal{M}(T):=\frac{1}{2}(V|T|+|T|V).$$
The mean transform has also well been  studied in many articles, it is the arithmetic mean of $T=V|T|$ and it's Duggal transform $\tilde{T}=|T|V$, it has nice properties, for example we can cite \cite{ccm, hyy, jkp}. The fixed point of mean transform are the quasi-normal operator as the Aluthge transform. 

The main result of the present paper is the following theorem which gives a nice characterization of the commuting maps with the mane transform under the Jordan product.

\begin{thm} \label{th1}
 Let $H$ and $K$ be  two  complex Hilbert space, with $\dim H \geq 3$. Let   $\Phi :\B(H)\to\B(K)$ be a bijective map. Then $\Phi$ satisfies the condition 
\begin{equation*}
\mathcal {M}(\Phi(A)\circ\Phi(B))=\Phi(\mathcal{M}(A\circ B)) \;\; \text{for all}\;\; A, B \in \B(H),
\end{equation*}
  if and only if  there exists  a unitary or anti-unitary operator $U : H\to K $,  such that  
 $$\Phi(T)= UTU^*\;\;\;\;\text{ for every} \; T\in\B(H).$$
\end{thm}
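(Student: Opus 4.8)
The \emph{if} direction is routine. If $U:H\to K$ is unitary or anti-unitary, then $\Psi(T)=UTU^{*}$ is a $*$-isomorphism of $\B(H)$ onto $\B(K)$, conjugate-linear in the anti-unitary case; in particular it preserves positivity, so $|\Psi(T)|=U|T|U^{*}$, and it carries partial isometries with kernel $\mathcal N(T)$ to partial isometries with kernel $\mathcal N(\Psi(T))$. Hence $\Psi$ maps the polar decomposition of $T$ to that of $\Psi(T)$, so $\mathcal M(UTU^{*})=U\mathcal M(T)U^{*}$, and since $\Phi(A)\circ\Phi(B)=U(A\circ B)U^{*}$ the required identity holds. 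For the converse I would argue in four stages.

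\emph{Stage 1 (normalisation and two lemmas).} Putting $B=0$ and applying the identity to an $A$ with $\Phi(A)=0$ (surjectivity) gives $\mathcal M(0)=\Phi(0)$, i.e.\ $\Phi(0)=0$. Two facts about $\mathcal M$ drive everything: (i) every self-adjoint operator is normal, hence quasi-normal, hence a fixed point of $\mathcal M$, so $\mathcal M$ is the identity on the self-adjoint part of $\B(H)$; (ii) $\mathcal M(S)=0$ forces $S=0$ --- writing $S=V|S|$, the relation $V|S|=-|S|V$ together with $V^{*}V|S|=|S|$ yields $V^{*}|S|V=-|S|\le 0$, whence $|S|=0$. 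Fact (i) makes the functional equation usable whenever the inputs force self-adjointness, and (ii) lets one promote ``$\mathcal M(X)=0$'' to ``$X=0$''.

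\emph{Stage 2 (projections, and production of $U$).} For projections $P,Q$ one has $P\circ P=P$, while $P\circ Q=0$ exactly when $P\perp Q$. The equation and Stage 1 then give $\mathcal M(\Phi(P)^{2})=\Phi(P)$, and $\Phi(P)\circ\Phi(Q)=0$ whenever $P\perp Q$. A characterisation of the operators $X$ with $\mathcal M(X^{2})=X$ --- such an $X$ is necessarily idempotent and quasi-normal, hence a partial isometry of norm $\le 1$, hence a projection --- shows $\Phi$ sends projections to projections; bijectivity makes $\Phi$ an orthogonality-preserving bijection of the projection lattice, so $\Phi(I)=I$. Since compression by a projection is expressible through Jordan products (for $P^{2}=P$ one has $P\circ(P\circ X)=P\circ X+c\,PXP$ for a fixed constant $c$), $\Phi$ also respects the order $P\le Q$ and maps rank-one projections to rank-one projections. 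With $\dim H\ge 3$, an Uhlhorn/Wigner-type theorem supplies a unitary or anti-unitary $U:H\to K$ with $\Phi(P_{x})=P_{Ux}$ for every unit vector $x$, where $P_{x}$ denotes the projection onto $\C x$.

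\emph{Stage 3 (from projections to all operators), and conclusion.} Replacing $\Phi$ by $T\mapsto U^{*}\Phi(T)U$, assume $\Phi$ fixes every rank-one projection. Using the Jordan-product expression for $P_{x}TP_{x}$, together with $\mathcal M(\mu P_{x})=\mu P_{x}$ for every scalar $\mu$ (scalar multiples of rank-one projections are quasi-normal), one compresses the functional equation at $P_{x}$ and obtains $\langle\Phi(T)x,x\rangle=\langle Tx,x\rangle$ for all $T$ and all unit $x$ --- once enough additivity/homogeneity of $\Phi$ has been secured (equivalently, once $\Phi$ is pinned down on the operators $\lambda P_{x}$, which also distinguishes the linear from the conjugate-linear case). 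The polarisation identity $\langle Tx,y\rangle=\tfrac14\sum_{k=0}^{3}i^{k}\langle T(x+i^{k}y),x+i^{k}y\rangle$ then forces $\Phi(T)=T$ in the normalised picture, i.e.\ $\Phi(T)=UTU^{*}$ in general; the transpose-type Jordan automorphisms $T\mapsto UT^{t}U^{*}$ are eliminated because they do not reproduce these compressions (equivalently they already fail the original identity on a $2\times 2$ example), leaving only the unitary and anti-unitary conjugations.

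\emph{Main obstacle.} The delicate part is Stage 3: the functional equation a priori controls operators only ``modulo $\mathcal M$'' and carries no linearity, so the real work is to show that $\Phi$ is additive (real-linear on self-adjoints, then complex-linear or conjugate-linear on $\B(H)$) and respects the Cartesian decomposition $T=X+iY$ with $X,Y$ self-adjoint. Everything else is the classical projection-lattice / Wigner machinery combined with the bookkeeping identities for $\mathcal M$ and for Jordan compressions established in Stages 1--2.
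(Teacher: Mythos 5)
Your overall architecture coincides with the paper's: normalise, show $\Phi$ preserves projections and orthogonality, invoke Uhlhorn to produce $U$, reduce to showing the normalised map is the identity on $\B(H)$, and kill the adjoint alternative with a small non-normal example. The \emph{if} direction and Stage 1 are fine (your argument that $\mathcal{M}(S)=0$ forces $S=0$ is essentially the standard one). But the proof as written has a genuine gap exactly where you flag it, and it is not mere bookkeeping: before you can compress the functional equation at $P_x$ and run polarisation, you must determine how $\Phi$ acts on the scalar multiples $\lambda P_x$, and a priori the induced map $\lambda\mapsto h(\lambda)$ is only a multiplicative bijection of $\C$. The paper closes this by a specific chain your outline does not supply: (a) a norm-and-trace computation forcing $\Phi(\alpha\, x\otimes x)=h_x(\alpha)\,x\otimes x$ and then $h_x=h$ independent of $x$; (b) additivity of $h$, obtained by testing the equation against the rank-two self-adjoint $A=x\otimes y+y\otimes x$, for which $A\circ(\alpha P+\beta Q)=\tfrac{\alpha+\beta}{2}A$; (c) the identity $h(W(A))=W(\Phi(A))$ for self-adjoint $A$, which shows the field automorphism $h$ is bounded on rectangles and hence, by the Kallman--Simmons theorem, equals the identity or complex conjugation --- without (c) a wild automorphism of $\C$ survives and the polarisation step cannot even be formulated. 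Your closing move (eliminating the conjugate case) is in the right spirit; the paper does it by exhibiting a rank-one $x\otimes x'$ with $\mathcal{M}(A^{*})\neq\mathcal{M}(A)^{*}$.

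A smaller soft spot is Stage 2: the implication ``$\mathcal{M}(X^{2})=X$ implies $X$ idempotent and quasi-normal'' is asserted rather than proved, and it is not immediate; the paper devotes a four-step argument (positivity of $|X^{2}|V_{2}|X^{2}|$, injectivity of $2I-X^{*}$, a kernel inclusion, and positivity of $V_{2}$) to conclude that such an $X$ is a projection. Likewise, iterating Jordan products through $\Phi$ (your identity $PXP=2P\circ(P\circ X)-P\circ X$) is not free, because $\Phi$ only intertwines $\mathcal{M}$ applied to a single Jordan product, so each nesting inserts an extra $\mathcal{M}$ that must be stripped off using the lemma that $\mathcal{M}(T)$ is a projection if and only if $T$ is, in which case $\mathcal{M}(T)=T$. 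These points are fixable, but they are where the actual work of the paper lives.
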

 
\begin{rem}
 Observe that,  even if the  hypothesis on the map $\Phi$  is purely algebraic,  the conclusion gives automatically the continuity of the map.
   Also, the linearity of $\Phi$ is not assumed, we get it automatically.
\end{rem}

\section{Auxiliary results of the mean transform}

   For $x,y \in H$  we denote by  $x\otimes y$ the  rank one operator (or $0$) defined by 
 $$(x\otimes y)u = <u,y>x \: \mbox{  for } \:  u\in H.$$
  Every rank one   operator has  the previous form, and  $x\otimes y$ is an orthogonal projection (i.e. $T^2=T=T^*)$ if and only  if $x=y$ and $\|x\|=1$.   

 The following results can be found in \cite{ccm}.
  \begin{pro}\label{p1}\cite{ccm} Let $x,y \in H$ be two non-zero vectors. Let $T=x\otimes y$, then 
$$\mathcal{M}({T})=\mathcal{M}({x\otimes y})=\frac{1}{2}(x+\dfrac{<x,y>}{\|y\|^2}y)\otimes y.$$  
\end{pro}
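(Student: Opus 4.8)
The plan is to reduce everything to explicit rank-one computations, the central tool being the product rule $(a\otimes b)(c\otimes d)=<c,b>\,(a\otimes d)$ together with the adjoint formula $(a\otimes b)^*=b\otimes a$ and the first-slot additivity $(a\otimes y)+(b\otimes y)=(a+b)\otimes y$. Since $\mathcal{M}(T)=\frac{1}{2}(V|T|+|T|V)$, it suffices to produce the polar decomposition $T=V|T|$ of $T=x\otimes y$ explicitly and then to evaluate the two products $V|T|$ and $|T|V$.

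First I would compute $|T|$. Using the product rule, $T^*T=(y\otimes x)(x\otimes y)=\|x\|^2\,(y\otimes y)$. Since $\frac{1}{\|y\|^2}(y\otimes y)$ is the orthogonal projection onto $\mathrm{span}(y)$ (and a projection is its own positive square root), taking the positive square root gives $|T|=(T^*T)^{1/2}=\frac{\|x\|}{\|y\|}\,(y\otimes y)$.

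Next I would identify the partial isometry $V$. The natural candidate is $V=\frac{1}{\|x\|\|y\|}(x\otimes y)$, and I would confirm it is the one appearing in the polar decomposition by checking the three defining properties: $V^*V=\frac{1}{\|y\|^2}(y\otimes y)$ is a projection (so $V$ is a partial isometry), $\mathcal{N}(V)=\{y\}^{\perp}=\mathcal{N}(T)$, and $V|T|=x\otimes y=T$ (the last via $(x\otimes y)(y\otimes y)=\|y\|^2(x\otimes y)$). By uniqueness of the polar decomposition, this is the correct $V$.

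Finally I would assemble the mean transform. On one hand $V|T|=T=x\otimes y$; on the other, $|T|V=\frac{1}{\|y\|^2}(y\otimes y)(x\otimes y)=\frac{<x,y>}{\|y\|^2}\,(y\otimes y)$ by the product rule. Averaging and pulling the common right factor $y$ out by additivity in the first slot yields $\mathcal{M}(T)=\frac{1}{2}\big(x\otimes y+(\frac{<x,y>}{\|y\|^2}y)\otimes y\big)=\frac{1}{2}(x+\frac{<x,y>}{\|y\|^2}y)\otimes y$, as claimed. I expect the only delicate point to be the square-root step—normalizing $y\otimes y$ to a genuine projection so that $(T^*T)^{1/2}$ is read off unambiguously—and the verification that the chosen $V$ satisfies $\mathcal{N}(V)=\mathcal{N}(T)$, which is what pins the decomposition down uniquely; the remaining manipulations are routine rank-one algebra.
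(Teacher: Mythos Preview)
Your proof is correct: the explicit polar decomposition $V=\frac{1}{\|x\|\|y\|}(x\otimes y)$, $|T|=\frac{\|x\|}{\|y\|}(y\otimes y)$ is the right one, and the rank-one algebra you carry out is accurate. Note that the paper itself does not prove this proposition but quotes it from \cite{ccm}; your direct computation via the polar decomposition is the standard argument and almost certainly what appears there.
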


\begin{pro}\label{pp2}\cite{ccm} Let $T\in B(H)$. Then 
$$\mathcal{M}(T)=0 \;\;\iff T=0.$$
\end{pro}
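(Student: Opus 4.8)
The forward implication is immediate: if $T=0$ then $V=0$ and $|T|=0$, so $\mathcal{M}(T)=0$ directly from the definition. The substance of the proposition is the converse, and the plan is to extract from the single identity $\mathcal{M}(T)=0$ enough commutation information between $V$ and $|T|$ to force $T=0$.

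Concretely, write the polar decomposition $T=V|T|$. The hypothesis $\mathcal{M}(T)=\frac{1}{2}(V|T|+|T|V)=0$ is equivalent to the operator identity
$$V|T|=-|T|V. \qquad (\ast)$$
First I would right-multiply $(\ast)$ by $|T|$ to obtain $V|T|^2=-|T|(V|T|)$, and then substitute $(\ast)$ once more into the right-hand side, replacing the inner $V|T|$ by $-|T|V$; this yields $V|T|^2=|T|^2V$. Thus $V$ commutes with the positive operator $|T|^2$.

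The key step is then to upgrade this to commutation with $|T|$ itself. Since $|T|$ is the unique positive square root of $|T|^2$, it lies in the norm-closed algebra generated by $|T|^2$: the function $t\mapsto\sqrt{t}$ is a uniform limit of polynomials on the compact spectrum $\sigma(|T|^2)\subset[0,\||T|^2\|]$, so $|T|$ is a norm-limit of polynomials in $|T|^2$. Each such polynomial commutes with $V$, and passing to the limit gives $V|T|=|T|V$.

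Finally, combining $V|T|=|T|V$ with $(\ast)$ gives $V|T|=-V|T|$, hence $2V|T|=0$ and therefore $T=V|T|=0$. The only genuinely non-formal point is the passage from commutation with $|T|^2$ to commutation with $|T|$, which I expect to be the main (though entirely standard) obstacle; everything else is elementary manipulation of the polar decomposition, and at no stage is any special structure of $V$ beyond boundedness required.
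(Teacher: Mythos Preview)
Your argument is correct. The paper itself does not prove this proposition; it is simply quoted from \cite{ccm}, so there is no ``paper's proof'' to compare against here. Your route via the anticommutation relation $V|T|=-|T|V$, bootstrapping to $V|T|^2=|T|^2V$, and then invoking the continuous functional calculus to get $V|T|=|T|V$, is sound; the only point requiring care is exactly the one you flagged, and the Weierstrass approximation of $\sqrt{t}$ on $[0,\||T|^2\|]$ handles it cleanly.

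For comparison, a somewhat shorter argument (closer in spirit to how such facts are often proved in the literature on the mean transform) avoids the functional calculus altogether: left-multiply $V|T|+|T|V=0$ by $V^*$ and use that $V^*V$ is the range projection of $|T|$, so $V^*V|T|=|T|$. This gives $|T|=-V^*|T|V$. The right-hand side is $-(\text{positive operator})$, while the left-hand side is positive, forcing $|T|=0$ and hence $T=0$. Your approach trades this positivity observation for a commutation/approximation step; both are short and elementary, and yours has the minor advantage of not using the specific partial-isometry property $V^*V|T|=|T|$ of the polar decomposition.
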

Next lemma gives a characterization of the nilpotent operator of order two.
  \begin{lem}\cite{ccm} \label{L1}
 Let $T\in \B(H)$. Then 
$$ \mathcal{M}(T)=\frac{T}{2} \: \text{ if and only if} \: T^2 = 0.$$
\end{lem}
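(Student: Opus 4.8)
The plan is to rewrite the equation $\mathcal{M}(T)=\tfrac{T}{2}$ directly in terms of the polar decomposition $T=V|T|$ and then translate both sides into statements about ranges and kernels. Since $V|T|=T$, the definition $\mathcal{M}(T)=\tfrac{1}{2}(V|T|+|T|V)$ becomes $\mathcal{M}(T)=\tfrac12\bigl(T+|T|V\bigr)$, so the identity $\mathcal{M}(T)=\tfrac{T}{2}$ is equivalent, after cancelling $\tfrac{T}{2}$ from both sides, to the single operator equation $|T|V=0$. Thus the whole lemma reduces to proving the equivalence $|T|V=0 \iff T^2=0$, and I would treat the two implications separately.

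For the forward direction I would simply compute $T^2=V|T|\,V|T|=V\,(|T|V)\,|T|$, which vanishes the instant $|T|V=0$. The reverse implication is the substantive one, and I would argue it through subspaces. Here I rely on the standard facts attached to the polar decomposition: first, $\mathcal{N}(V)=\mathcal{N}(T)=\mathcal{N}(|T|)$, the last equality holding because $\|Tx\|=\||T|x\|$ for all $x$; and second, the final space of the partial isometry $V$ is the closed subspace $\mathcal{R}(V)=\overline{\mathcal{R}(T)}$. Using $\mathcal{N}(|T|)=\mathcal{N}(T)$, the equation $|T|V=0$ says exactly $\mathcal{R}(V)\subseteq\mathcal{N}(T)$, that is $\overline{\mathcal{R}(T)}\subseteq\mathcal{N}(T)$; and since $\mathcal{N}(T)$ is closed this is the same as $\mathcal{R}(T)\subseteq\mathcal{N}(T)$, which is precisely $T^2=0$. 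Reading this chain of equivalences in the opposite order delivers $T^2=0\Rightarrow|T|V=0$ as well, so both directions of the reduced statement follow.

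The step I expect to require the most care is the identification $\mathcal{R}(V)=\overline{\mathcal{R}(T)}$ together with the passage between $\mathcal{R}(T)$ and its closure: one must be sure that the final space of $V$ really is all of $\overline{\mathcal{R}(T)}$ (so that $|T|V=0$ constrains the entire closed range and not merely $\mathcal{R}(T)$), and that inclusion into the closed subspace $\mathcal{N}(T)$ is insensitive to taking closures. An equivalent packaging of the same argument proceeds by adjoints: $|T|V=0\iff V^{*}|T|=0\iff \mathcal{R}(|T|)\subseteq\mathcal{N}(V^{*})=\mathcal{N}(T^{*})$, and since $\mathcal{N}(T^{*})$ is closed and $\overline{\mathcal{R}(|T|)}=\overline{\mathcal{R}(T^{*})}$ this reads $\mathcal{R}(T^{*})\subseteq\mathcal{N}(T^{*})$, i.e.\ $(T^{*})^{2}=0$, which is equivalent to $T^{2}=0$. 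Either route closes the proof, the only genuine content beyond formal manipulation being the geometric structure of the polar decomposition recalled above.
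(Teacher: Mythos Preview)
Your argument is correct. The reduction of $\mathcal{M}(T)=\tfrac{T}{2}$ to $|T|V=0$ is immediate from the definition, and both directions of the equivalence $|T|V=0\iff T^2=0$ are handled cleanly via the standard polar-decomposition identities $\mathcal{N}(V)=\mathcal{N}(T)=\mathcal{N}(|T|)$ and $\mathcal{R}(V)=\overline{\mathcal{R}(T)}$. The closure step you flagged is indeed the only point requiring a moment's thought, and you dealt with it properly.

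Note, however, that the paper does not supply its own proof of this lemma: it is quoted from \cite{ccm} and stated without argument. So there is no in-paper proof to compare against. Your write-up is a complete and self-contained justification, and the adjoint variant you sketch at the end is an equally valid route.
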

\begin{thm}\label{thm1} Let $T\in \B(H)$. Then 
$$ \mathcal{M}(T^2)=T \: \text{ if and only if  $T$ is an orthogonal projection}.$$
\end{thm}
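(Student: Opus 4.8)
The plan is to prove the two implications separately: the forward implication is a short computation, while the converse requires a structural analysis of the polar decomposition of $T^2$. For the sufficiency, suppose $T$ is an orthogonal projection, so $T=T^*=T^2\ge 0$. Then $T^*T=T$ is again an orthogonal projection, hence $|T|=(T^*T)^{1/2}=T$, and by uniqueness of the polar decomposition the partial isometry of $T=V|T|$ must be $V=T$ (indeed $T\,|T|=T^2=T$, $T$ is a partial isometry, and $\mathcal N(T)=\mathcal N(T)$). Therefore $\mathcal M(T)=\tfrac12(V|T|+|T|V)=\tfrac12(T^2+T^2)=T$, and since $T^2=T$ this gives $\mathcal M(T^2)=\mathcal M(T)=T$.

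For the necessity, assume $\mathcal M(T^2)=T$; we may assume $T\ne 0$. Write the polar decomposition $T^2=V_2P_2$ with $P_2=|T^2|$. The hypothesis becomes
\begin{equation}\label{eq:plankey}
2T=V_2P_2+P_2V_2=T^2+\widetilde{T^2},
\end{equation}
where $\widetilde{T^2}=P_2V_2$ is the Duggal transform of $T^2$, so $\widetilde{T^2}=2T-T^2$. Since $\|\widetilde{S}\|\le\|S\|$ for every operator $S$ (because $\|\widetilde{S}\|^2=\|V^*|S|^2V\|\le\|S\|^2$), \eqref{eq:plankey} already forces $2\|T\|\le\|\widetilde{T^2}\|+\|T^2\|\le 2\|T^2\|\le 2\|T\|^2$, hence $\|T\|\ge 1$.

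The crux is to show that \eqref{eq:plankey} forces $T^2$ to be quasi-normal, equivalently $\widetilde{T^2}=T^2$, equivalently $T^2=T$. I would extract this by combining \eqref{eq:plankey} with the elementary relations $V_2P_2=T^2$, $V_2^*V_2P_2=P_2$ (and their adjoints), multiplying \eqref{eq:plankey} on either side by $V_2$ and by $P_2$ to obtain enough identities to force $|T^2|=|T|^2$ and $T(T^*T)=(T^*T)T$, i.e. quasi-normality of $T$. Granting this, $T^2$ is itself quasi-normal (one checks $|T^2|=|T|^2$ and that it commutes with $T^2$), so $\mathcal M(T^2)=T^2$; with the hypothesis this yields $T^2=T$. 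Then $T$ is an idempotent commuting with $T^*T$; its range is closed, and relative to $H=\mathcal R(T)\oplus\mathcal R(T)^\perp$ one has $T=\begin{pmatrix}I&A\\0&0\end{pmatrix}$ for some $A\colon\mathcal R(T)^\perp\to\mathcal R(T)$ (using $T\,\mathcal R(T)^\perp\subseteq\mathcal R(T)$, valid for any idempotent). Computing $T(T^*T)$ and $(T^*T)T$ and comparing the $(1,1)$-entries gives $AA^*=0$, so $A=0$ and $T=\begin{pmatrix}I&0\\0&0\end{pmatrix}$ is an orthogonal projection.

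The main obstacle is exactly the middle step: there is no closed formula for $V_2$ or $P_2=|T^2|$ in terms of $V$ and $|T|$, so wringing quasi-normality (equivalently $T=T^2$) out of the single operator identity \eqref{eq:plankey} is the delicate point; the norm estimate and the concluding block-matrix computation are routine.
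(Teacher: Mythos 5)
Your forward implication and your concluding block-matrix reduction (from ``$T$ idempotent and quasi-normal'' to ``$T$ is an orthogonal projection'') are correct, and your overall strategy --- reduce everything to showing that the identity $2T=T^2+|T^2|V_2$ forces $T^2$ to be quasi-normal, hence $T=T^2$ --- is exactly the architecture of the paper's proof. But the proposal has a genuine gap, and it sits precisely where you yourself flag ``the delicate point'': the step ``I would extract this by \dots multiplying on either side by $V_2$ and by $P_2$ to obtain enough identities'' is not carried out, and it is the entire content of the theorem. Nothing in what you wrote actually derives quasi-normality of $T^2$ (equivalently $V_2\ge 0$) from the hypothesis; the norm estimate $\|T\|\ge 1$ is true but is never used and does not feed into the argument. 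As written, the necessity direction is a plan with its central lemma marked ``granting this.''

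For comparison, here is how the paper closes that gap. Multiplying $\tfrac{1}{2}(T^2+|T^2|V_2)=T$ on the left by $V_2^*$ and using $V_2^*V_2|T^2|=|T^2|$ gives $\tfrac{1}{2}\bigl(|T^2|+V_2^*|T^2|V_2\bigr)=V_2^*T\ge 0$, whence $T^*|T^2|=T^*(V_2^*T)T\ge 0$; self-adjointness of $T^*|T^2|$ then yields $|T^2|V_2|T^2|=T^*|T^2|T\ge 0$. A separate argument shows $2I-T^*$ is injective (if $T^*x=2x$ one deduces $T^4x=0$ and hence $16\|x\|^2=\langle x,T^4x\rangle=0$), which gives the kernel inclusion $\mathcal N(T^2)\subseteq\mathcal N((T^2)^*)$, i.e.\ $\mathcal N(V_2)\subseteq\mathcal N(V_2^*)$. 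Decomposing $H=\mathcal N(|T^2|)\oplus\overline{\mathcal R(|T^2|)}$ and combining the last two facts shows $\langle V_2x,x\rangle\ge 0$ for all $x$, so the partial isometry $V_2$ is positive, hence an orthogonal projection, hence $T^2=|T^2|\ge 0$ and $T=\mathcal M(T^2)=T^2=T^*$. If you want to salvage your write-up, this chain (positivity of $|T^2|V_2|T^2|$, injectivity of $2I-T^*$, the kernel inclusion, then $V_2\ge 0$) is the missing middle; note also that your intended intermediate target ``$|T^2|=|T|^2$ and $T$ quasi-normal'' is itself as strong as what you are trying to prove and would need the same kind of argument.
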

\begin{proof} It is clear that, if $T$ is a projection, then $\mathcal{M}(T^2)=T=T^*$. So we need to show the direct meaning.
 
Let $T^2=V_2|T^2|$ be the polar decomposition of $T^2$ and suppose that $ \mathcal{M}(T^2)=T$. We will divided the proof on a few steps. 

\textbf{Step 1 : } The operator $|T^2|V_2|T^2|$ is positive. From the assumption, we have 
\begin{equation}\label{L7*}
\dfrac{T^2+|T^2|V_2}{2}=T \;\;\text{ and }\;\;\dfrac{(T^2)^*+V_2^*|T^2|}{2}=T^*.
\end{equation}
Thus 
$$\dfrac{|T^2|+V_2^*|T^2|V_2}{2}=V_2^*T \geq 0, $$
 is fact that the operators 
$|T^2|$ and $V_2^*|T^2|V_2$ are positive. Therefore $$T^*|T^2|=T^*V_2^*T^2=T^*(V_2^*T)T\geq 0.$$
  In particular $T^*|T^2|$ is self adjoint. Hence 
$T^*|T^2|=|T^2|T$. Multiplying on the left by $T^*$, we get that   
$$|T^2|V_2|T^2|=|T^2|TT=T^*|T^2|T\geq 0.$$

\textbf{Step 2 : } $2I-T^*$ is injective. 

Let $x\in H$ such that $(2I-T^*)x=0$. Then  $T^*x=2x$ and  thus $(T^2)^*x=4x$. By the assumption we have  $|T^2|V_2=2T-T^2$. We take the adjoint, we  get that 
$$V_2^*|T^2|=2T^*-(T^2)^*.$$ 
It follows that $V_2^*|T^2|x=2T^*x-(T^2)^*x=0$. Thus $|T^2|V_2^*|T^2|x=0$. Since  
$$|T^2|V_2^*|T^2|=|T^2|V_2|T^2|\geq 0 \;\; \text{ ( see  Step 1)},$$
 then $$|T^2|V_2|T^2|x=0.$$  
 Multiplying the preceding equality on the left by $V_2$,  we get that 
 $$T^4x=V_2|T^2|V_2|T^2|x=0.$$ 
 It follows that 
 $$16\|x\|^2=<(T^4)^*x,x>=<x,T^4x>=0.$$
  By consequence $x=0$ and then $2I-T^*$ is injective. 
  
\textbf{Step 3 : } The inclusion $\mathcal{N}(T^2)\subseteq\mathcal{N}(T^*)\subseteq \mathcal{N}((T^2)^*)$ holds. 

Let $x\in \mathcal{N}(T^2)$, we have  $T^2x=|T^2|x=0$, hence by equation (\ref{L7*})  $$V_2^*|T^2|x=(2I-T^*)T^*x=0,$$ 
and since $2I-T^*$ is injective then $T^*x=0$, thus $x\in \mathcal{N}(T^*)\subseteq \mathcal{N}((T^2)^*)$.

\textbf{Step 4 : } $T$ is a projection.

Now we are in position to prove that $T$ is a projection. First we show that $V_2\geq 0$.  Let $x\in H$ and   consider the following decomposition  $H=\mathcal{N}(|T^2|)\oplus \overline{\mathcal{R}(|T^2|)}$. Put  
$x=x_1+x_2$ with $x_1\in \mathcal{N}(|T^2|), \;\; x_2\in \overline{\mathcal{R}(|T^2|)} $. Then we have
\begin{eqnarray*}
\langle V_2x,x\rangle &=& \langle V_2x_1+V_2x_2,x_1+x_2\rangle \\
&=&\langle V_2x_2,x_2 \rangle +\langle V_2x_2,x_1\rangle \\
&=& <V_2x_2,x_2>+<x_2,V_2^*x_1> \\
& = &<V_2x_2,x_2> \;\;(\text{ since } \;\; \mathcal{N}(V_2)\subseteq  \mathcal{N}(V_2^*)).
\end{eqnarray*}
Since  $|T^2|V_2|T^2|\geq 0$ ( see Step  1),  $<V_2x,x>=<V_2x_2,x_2>\geq 0$. Thus  $V_2$ a positive partial isometry,  so $V_2$ is an orthogonal projection and 
$T^2=V_2|T^2|=V_2^*V_2|T^2|=|T^2|$ is positive. Therefore $\mathcal{M}(T^2)=T^2=T=T^*$. This completes the proof.  
\end{proof}


\begin{cor} Let $T\in \B(H)$. Then the following equivalence holds: 
$$T \;\text{ is a projection } \;\iff (\mathcal{M}(T))^2=T.$$
\end{cor}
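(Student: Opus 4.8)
The plan is to reduce the statement almost entirely to Theorem~\ref{thm1}, which characterizes the operators $S$ with $\mathcal{M}(S^2)=S$ as the orthogonal projections. The forward implication is an easy direct computation: if $T$ is a projection, then $T=T^*=T^2\geq 0$, so its polar decomposition is $T=V|T|$ with $|T|=(T^*T)^{1/2}=(T^2)^{1/2}=T$ and $V=T$ (the partial isometry with $\mathcal{N}(V)=\mathcal{N}(T)$). Hence $\mathcal{M}(T)=\tfrac12(V|T|+|T|V)=T^2=T$, and therefore $(\mathcal{M}(T))^2=T^2=T$.

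For the converse, suppose $(\mathcal{M}(T))^2=T$ and set $S:=\mathcal{M}(T)\in\B(H)$. The hypothesis says precisely that $S^2=T$, so applying $\mathcal{M}$ to both sides gives $\mathcal{M}(S^2)=\mathcal{M}(T)=S$. Now Theorem~\ref{thm1}, applied to the operator $S$, yields that $S$ is an orthogonal projection. Consequently $T=S^2=S$ is an orthogonal projection, which is what we wanted.

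I do not expect any genuine obstacle here, since Theorem~\ref{thm1} carries all the analytic weight (the steps about positivity of $|S^2|V_2|S^2|$, injectivity of $2I-S^*$, the kernel inclusions, and the final positivity argument). The only point that needs a careful — but routine — check is the claim in the forward direction that every projection is a fixed point of the mean transform; once that identity $\mathcal{M}(T)=T$ for projections is in hand, both directions of the equivalence follow in a couple of lines.
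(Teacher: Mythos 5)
Your proof is correct and follows essentially the same route as the paper: the converse is exactly the paper's argument of applying $\mathcal{M}$ to the hypothesis to get $\mathcal{M}\bigl((\mathcal{M}(T))^2\bigr)=\mathcal{M}(T)$ and then invoking Theorem~\ref{thm1} with $\mathcal{M}(T)$ in place of $T$. Your explicit verification of the forward direction (that a projection satisfies $V=|T|=T$, hence $\mathcal{M}(T)=T$) is a welcome detail the paper leaves implicit, but the substance is identical.
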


\begin{proof}
We need to show the second direction. Suppose that 
$(\mathcal{M}(T))^2=T$. Then  $\mathcal{M}((\mathcal{M}(T))^2)=\mathcal{M}(T)$. By Theorem \ref{thm1} replacing $T$ by $\mathcal{M}(T)$, we get  that $\mathcal{M}(T)$ is a projection. Hence $\mathcal{M}(T)= (\mathcal{M}(T))^2=T$ is also a projection. 
\end{proof}
\begin{lem}\cite{ccm} Let $T\in \B(H)$ and $T=V|T|$ be the polar decomposition of $T$. The following equivalence holds: 
$$\mathcal{M}(T) \;\;\text{ is self-adjoint }\;\iff V \;\;\text{  is self-adjoint}. $$ 
\end{lem}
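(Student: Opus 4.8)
The plan is to prove both implications, with the forward direction being the substantial one. Suppose $\mathcal{M}(T)$ is self-adjoint, where $T = V|T|$ is the polar decomposition. First I would write out $\mathcal{M}(T) = \frac{1}{2}(V|T| + |T|V)$ and use self-adjointness together with $|T|^* = |T|$ to obtain $V|T| + |T|V = |T|V^* + V^*|T|$, i.e.
\begin{equation*}
(V - V^*)|T| = |T|(V - V^*).
\end{equation*}
So $V - V^*$ commutes with $|T|$, hence with any continuous function of $|T|$, in particular with the projection onto $\overline{\mathcal{R}(|T|)} = \overline{\mathcal{R}(V^*)} = \mathcal{N}(V)^\perp$. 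The key structural fact I want to exploit is the decomposition $H = \mathcal{N}(V) \oplus \mathcal{N}(V)^\perp$, on which $V$ acts as $0$ on $\mathcal{N}(V)$ and as a (partial) isometry from $\mathcal{N}(V)^\perp$ onto $\mathcal{R}(V)$; note $|T|$ already vanishes on $\mathcal{N}(V) = \mathcal{N}(|T|)$ and maps $\mathcal{N}(V)^\perp$ into itself.

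The main obstacle is passing from "$V - V^*$ commutes with $|T|$" to "$V = V^*$," since a priori $V$ might move $\mathcal{N}(V)^\perp$ to a different subspace $\mathcal{R}(V)$, and $|T|$ carries no information off $\mathcal{N}(V)^\perp$. My approach would be to look at the relation on the range: multiply $(V-V^*)|T| = |T|(V-V^*)$ on the right by $V^*$ and use $VV^*V = V$, $|T|V^* = $ (something involving $|T|$ via $|T| = V^*V|T| = V^* T$). Concretely, since $V^*V$ is the projection onto $\mathcal{N}(V)^\perp$ and $V^*V|T| = |T|$, compute $(V-V^*)|T|V^* = V|T|V^* - V^*|T|V^*$ and $|T|(V-V^*)V^* = |T|VV^* - |T|V^*V^* = |T|VV^*$ since $|T|V^*V^*$ — here I'd use that $V^*$ maps into $\mathcal{N}(V)^\perp$ only if $V$ is "nice", which is exactly what we're proving, so instead I would work with $|T| VV^* = |T|$ directly: because $VV^*$ is the projection onto $\mathcal{R}(V)$ and on $\mathcal{N}(V)^\perp \supseteq \overline{\mathcal{R}(|T|)}$... this needs care, and is where the real work lies.

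A cleaner route, which I would actually pursue, is: from $(V - V^*)|T| = |T|(V - V^*)$, apply both sides to a vector and take inner products to show $V - V^*$ is skew-adjoint (clear) and commutes with $|T|$, then set $S = V - V^*$ and note $S$ commutes with $|T|^{1/2}$ too; combining with the fact that $\mathcal{M}(T) - \mathcal{M}(T)^* = \frac{1}{2}(S|T| + |T|S) \cdot(\text{adjust sign})$... Actually the most economical argument: self-adjointness of $\mathcal{M}(T)$ gives $V|T| + |T|V$ self-adjoint; since $|T|V + V|T| = (|T|^{1/2}(|T|^{1/2}V + V|T|^{1/2}) + (\cdots))$ I would instead test against $\mathcal{N}(|T|) = \mathcal{N}(V)$ and $\overline{\mathcal{R}(|T|)}$ separately: on $\mathcal{N}(V)$ both $V$ and $|T|$ vanish, so $\mathcal{M}(T)$ vanishes there; hence self-adjointness of $\mathcal{M}(T)$ is equivalent to self-adjointness of its compression to $\mathcal{N}(V)^\perp$, where $|T|$ is injective with dense range. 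On that subspace, from $V|T| + |T|V = |T|V^* + V^*|T|$ and injectivity/density of $|T|$ one can solve for $V$ in terms of $V^*$; I expect that after multiplying by $|T|^{-1}$ (densely defined) one gets $V = V^*$ on $\mathcal{N}(V)^\perp$, and since $V = V^* = 0$ on $\mathcal{N}(V)$ automatically (using $\mathcal{N}(V) = \mathcal{N}(V^*)$ from the earlier observation that $\mathcal{N}(V)\subseteq\mathcal{N}(V^*)$ for the polar-decomposition partial isometry when the range relation holds), we conclude $V = V^*$. The converse is immediate: if $V = V^*$ then $(V|T| + |T|V)^* = |T|V^* + V^*|T| = |T|V + V|T|$, so $\mathcal{M}(T)$ is self-adjoint.
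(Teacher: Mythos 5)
Your forward direction rests on a sign error that derails the whole argument. Writing out $\mathcal{M}(T)=\mathcal{M}(T)^*$ gives $V|T|+|T|V=|T|V^*+V^*|T|$, and grouping terms yields $(V-V^*)|T|=|T|(V^*-V)=-|T|(V-V^*)$: the skew-adjoint operator $S=V-V^*$ \emph{anticommutes} with $|T|$, it does not commute with it. Everything you build on the commutation identity (commuting with continuous functions of $|T|$, with the projection onto $\overline{\mathcal{R}(|T|)}$, "multiplying by $|T|^{-1}$ to solve for $V$") therefore starts from a false premise. Moreover, even if the commutation relation were correct it would not suffice: any skew-adjoint operator commuting with $|T|$ satisfies it, so one cannot conclude $V=V^*$ from it alone, and your own text concedes this by trailing off ("this needs care", "I expect that \dots"). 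The final kernel step is also unjustified as stated: for the partial isometry of a polar decomposition one has $\mathcal{N}(V^*)=\mathcal{R}(V)^\perp$, and the inclusion $\mathcal{N}(V)\subseteq\mathcal{N}(V^*)$ is not automatic; it has to be derived.

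The missing idea is to exploit the anticommutation by squaring it. From $S|T|=-|T|S$ one gets $S|T|^2=(S|T|)|T|=-|T|(S|T|)=|T|^2S$, so $S$ commutes with $|T|^2=T^*T$ and hence with $|T|=(|T|^2)^{1/2}$ (a norm limit of polynomials in $|T|^2$). Combining $S|T|=|T|S$ with $S|T|=-|T|S$ forces $S|T|=0$, i.e.\ $V=V^*$ on $\mathcal{R}(|T|)$ and hence on $\overline{\mathcal{R}(|T|)}=\mathcal{N}(V)^\perp$, the initial space of $V$. Consequently $\mathcal{R}(V)=V(\mathcal{N}(V)^\perp)=V^*(\mathcal{N}(V)^\perp)\subseteq\overline{\mathcal{R}(V^*)}=\mathcal{N}(V)^\perp$, so $\mathcal{N}(V)\subseteq\mathcal{R}(V)^\perp=\mathcal{N}(V^*)$, and both $V$ and $V^*$ vanish on $\mathcal{N}(V)$; thus $V=V^*$ everywhere. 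Your converse direction is correct as written. (The paper itself gives no proof of this lemma, citing it from the reference \cite{ccm}, so there is no in-text argument to compare with.)
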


\begin{lem}\label{Lem1} Let $T\in \B(H)$. Then
$\mathcal{M}(T)$ is a projection if and only if  $T$  is also a projection. In this case  $\mathcal{M}(T)=T$.
\end{lem}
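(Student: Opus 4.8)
The plan is to prove the non-trivial direction: if $\mathcal{M}(T)$ is a projection, then $T$ is a projection and in fact $T=\mathcal{M}(T)$. First I would record that a projection is in particular self-adjoint, so $\mathcal{M}(T)$ is self-adjoint; by the preceding lemma this forces the partial isometry $V$ in the polar decomposition $T=V|T|$ to be self-adjoint, hence $V=V^*$ is itself an orthogonal projection (a self-adjoint partial isometry satisfies $V^2=VV^*V=V$). Consequently $\mathcal{M}(T)=\tfrac12(V|T|+|T|V)$ is the symmetrized product of the positive operator $|T|$ with the projection $V$, and $\mathcal{R}(V)\supseteq\overline{\mathcal{R}(|T|)}$ while $\mathcal{N}(V)=\mathcal{N}(T)=\mathcal{N}(|T|)$, so actually $V|T|=|T|=|T|V$ on the orthogonal decomposition $H=\mathcal{N}(|T|)\oplus\overline{\mathcal{R}(|T|)}$. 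Indeed $V$ acts as the identity on $\overline{\mathcal{R}(|T|)}$ and as $0$ on $\mathcal{N}(|T|)$, and $|T|$ leaves both summands invariant, so $V$ and $|T|$ commute and $\mathcal{M}(T)=V|T|=T$.

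Once $\mathcal{M}(T)=T$ is established, the hypothesis says $T=\mathcal{M}(T)$ is a projection, and the claim follows. Alternatively — and perhaps more cleanly — once I know $V$ is a self-adjoint partial isometry commuting with $|T|$, I can write $\mathcal{M}(T)=V|T|$, so $(\mathcal{M}(T))^2=(V|T|)^2=V^2|T|^2=V|T|^2$; setting this equal to $\mathcal{M}(T)=V|T|$ gives $V|T|^2=V|T|$, i.e. $V|T|(|T|-I)=0$, whence on $\overline{\mathcal{R}(|T|)}$ we get $|T|=I$ and therefore $T=V|T|=V$ is a projection equal to $\mathcal{M}(T)$. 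A third route, essentially free from the earlier material, is to invoke the Corollary just proved: if $P:=\mathcal{M}(T)$ is a projection then $P^2=P$, so $\mathcal{M}(P^2)=\mathcal{M}(P)$; but I would need $\mathcal{M}(\mathcal{M}(T))=\mathcal{M}(T)$ type identities, which are not obviously available, so I would stick with the polar-decomposition argument.

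The step I expect to be the main obstacle is verifying carefully that a self-adjoint partial isometry $V$ with $\mathcal{N}(V)=\mathcal{N}(|T|)$ actually commutes with $|T|$ and that $V|T|=|T|$. The point is that $V$ being a self-adjoint partial isometry means $V$ is the orthogonal projection onto $\mathcal{R}(V)=\mathcal{N}(V)^\perp=\mathcal{N}(|T|)^\perp=\overline{\mathcal{R}(|T|)}$; since $|T|$ is a positive operator it commutes with the orthogonal projection onto $\overline{\mathcal{R}(|T|)}$, and that projection acts as the identity on $\overline{\mathcal{R}(|T|)}\supseteq\mathcal{R}(|T|)$, giving $V|T|=|T|=|T|V$. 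Handling the closure carefully (rather than just $\mathcal{R}(|T|)$) is the only genuinely technical point; everything else is bookkeeping with the polar decomposition.
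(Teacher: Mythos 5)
Your argument breaks at the very first substantive step. From the preceding lemma you correctly get that $V$ is a self-adjoint partial isometry, but you then assert that ``a self-adjoint partial isometry satisfies $V^2=VV^*V=V$'' and is therefore an orthogonal projection. This is false: the partial-isometry identity is $VV^*V=V$, and with $V=V^*$ this reads $V^3=V$, not $V^2=V$. A self-adjoint partial isometry is a \emph{symmetry} on its support, i.e.\ of the form $P-Q$ with $P\perp Q$ projections; for instance $V=-P$ (coming from $T=-P$) is a self-adjoint partial isometry that is not a projection. Everything downstream of this step collapses: $V$ need not act as the identity on $\overline{\mathcal{R}(|T|)}$, so $V|T|=|T|$ is unjustified, and your only input from the hypothesis is the self-adjointness of $\mathcal{M}(T)$, which is strictly weaker than being a projection and cannot possibly yield the conclusion (again, $T=-P$ has $\mathcal{M}(T)=-P$ self-adjoint but $T$ is not a projection).

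The missing content is precisely how to exclude the ``$-1$ part'' of $V$, and this is where the paper's proof does its real work: it uses $V^3=V$ and the fact that $V^2$ and $\mathcal{M}(T)$ are \emph{both} projections with the same kernel to conclude $V^2=\mathcal{M}(T)$, and then computes
$$V=V^3=V\,\mathcal{M}(T)=\tfrac{1}{2}\bigl(|T|+V|T|V\bigr)\geq 0,$$
so that $V$ is a positive partial isometry and only \emph{then} a projection. Note that this step genuinely uses that $\mathcal{M}(T)$ is idempotent, not merely self-adjoint. If you want to keep your outline, you must insert an argument of this kind (or some other use of idempotency) before claiming $V^2=V$; as written, the proposal has a genuine gap at its central point.
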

\begin{proof}
It is clear that we need to show the first direction. Let $T=V|T|$ be the polar decomposition of $T$ and suppose that $\mathcal{M}(T)$ is a projection. In particular is a self-adjoint. By the preceding lemma, $V$ is a self adjoint partial isometry. Hence  $V^3=V$ and
$$\overline{\mathcal{R}(T)}=\overline{\mathcal{R}(V)}=\overline{\mathcal{R}(V^*)}=\overline{\mathcal{R}(T^*)}=\overline{\mathcal{R}(|T|)}.$$ 
Hence $V^2$ is the projection on $\overline{\mathcal{R}(V)}=\overline{\mathcal{R}(V^2)}$. 

On the other hand, we have also  $\mathcal{N}(\mathcal{M}(T))=\mathcal{N}(V)=\mathcal{N}(V^2)$. Since $\mathcal{M}(T)$ and $V^2$ are both projections with the same kernel, then 
$V^2=\mathcal{M}(T)$ and thus 
 $$V=V^3=V\mathcal{M}(T)=V(\dfrac{V|T|+|T|V}{2})=\dfrac{|T|+V|T|V}{2}.$$
From this equality,  we get  $V=\dfrac{|T|+V|T|V}{2}\geq 0$ as sum of two  positive operators.  It follows that $V$ is positive partial isometry.  In particular $V$ is also projection  and then $V=V^2$. Now from  the polar decomposition of $T$, we have   $T=V|T|=V^2|T|=|T|$ is positive. This implies that $\mathcal{M}(T)=T$ is also a projection. 
\end{proof}


\begin{lem}\label{L3} Let $T\in\B(H)$. Then
$$\mathcal{M}(T\circ P)=P \; \;  \text{for all rank one projection} \; P,\quad\mbox{if and only if}, \quad  T = I.$$

\end{lem}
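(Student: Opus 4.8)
The plan is to use the rank-one machinery from Proposition \ref{p1} together with Lemma \ref{Lem1}. The ``if'' direction is immediate: if $T=I$, then for any rank one projection $P$ we have $T\circ P = \frac{1}{2}(IP+PI)=P$, and $\mathcal{M}(P)=P$ by Lemma \ref{Lem1} (a projection is a fixed point of $\mathcal{M}$). So the work is entirely in the forward direction. Assume $\mathcal{M}(T\circ P)=P$ for every rank one projection $P=x\otimes x$ with $\|x\|=1$. I would first note that $T\circ P = \frac{1}{2}(TP+PT)$, and since $P=x\otimes x$ this is a finite-rank operator supported essentially on $\mathrm{span}\{x,Tx,T^*x\}$; in fact $TP = (Tx)\otimes x$ and $PT = x\otimes(T^*x)$, so
$$
T\circ P = \tfrac{1}{2}\bigl((Tx)\otimes x + x\otimes (T^*x)\bigr).
$$

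The key step is to extract information by testing against special vectors $x$. First take $x$ with $Tx = \lambda x$ an eigenvector (if one exists) — more robustly, test the identity on the vector $x$ itself and on vectors orthogonal to $x$. Applying $\mathcal{M}(T\circ P)=P$ and evaluating both sides at $x$ gives $\mathcal{M}(T\circ P)x = x$, while evaluating at any $u\perp x$ gives $\mathcal{M}(T\circ P)u=0$. The cleanest route, I expect, is: for fixed unit $x$, choose a unit vector $y$ with $y\perp x$ and $y\perp T^*x$ when $T^*x\notin\mathrm{span}\{x\}$, and compute $\langle (T\circ P)\, \cdot\,,\cdot\rangle$ type pairings, or better, directly analyze $\mathcal{M}$ of the rank-$\le 2$ operator above using an explicit polar-decomposition computation in the (at most) $2$-dimensional space $\mathrm{span}\{x, Tx\}$ together with $\mathrm{span}\{x,T^*x\}$. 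Since $\mathcal{M}(T\circ P)=P=x\otimes x$ has range $\mathrm{span}\{x\}$ and kernel $x^\perp$, both $\mathcal{R}(T\circ P)$ and $\mathcal{R}((T\circ P)^*)$ must be controlled: in particular $\mathcal{M}$ preserves the range and cokernel closures in a way that forces $(Tx)\otimes x + x\otimes(T^*x)$ to have range and corange equal to $\mathrm{span}\{x\}$. This should force $Tx\in\mathrm{span}\{x\}$ and $T^*x\in\mathrm{span}\{x\}$ for every unit $x$, i.e. every vector is an eigenvector of both $T$ and $T^*$, hence $T=\alpha I$ for a scalar $\alpha$.

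Once $T=\alpha I$, plug back in: $T\circ P=\alpha P$, so $\mathcal{M}(\alpha P)=P$. Using Proposition \ref{p1} with $x=y$ (or just $\mathcal{M}(\alpha P)=\alpha\mathcal{M}(P)=\alpha P$ when $\alpha$ real, and a short direct computation of the polar decomposition of $\alpha P$ when $\alpha$ is complex — $|\alpha P| = |\alpha| P$ and the partial isometry is $\frac{\alpha}{|\alpha|}P$, giving $\mathcal{M}(\alpha P)=\frac{\alpha}{|\alpha|}|\alpha|P=\alpha P$), we conclude $\alpha P = P$ for all rank one projections $P$, hence $\alpha=1$ and $T=I$.

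The main obstacle will be the middle step: rigorously deducing ``$Tx\parallel x$ and $T^*x\parallel x$ for all $x$'' from ``$\mathcal{M}\bigl((Tx)\otimes x + x\otimes(T^*x)\bigr)=x\otimes x$.'' The operator inside $\mathcal{M}$ is a sum of two rank-one operators and need not itself be rank one, so Proposition \ref{p1} does not apply directly; I would either (a) first restrict to unit vectors $x$ with $T^*x\perp x$ to kill the symmetrization cross-terms and reduce to a genuinely rank-one or nilpotent-of-order-two situation (where Lemma \ref{L1} becomes available), deduce a relation there, and then handle the remaining $x$ by a perturbation/continuity argument; or (b) do the honest $2\times 2$-or-$3\times 3$ matrix computation of the polar decomposition and of $\mathcal{M}$ for $(Tx)\otimes x + x\otimes(T^*x)$ and match it against $x\otimes x$. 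Approach (a) is likely shorter, exploiting that the earlier lemmas (\ref{L1}, \ref{Lem1}, Proposition \ref{pp2}) were presumably collected precisely for this; the bookkeeping of which vectors $x$ one is allowed to choose (needing $\dim H\geq 2$, available since the main theorem assumes $\dim H\geq 3$) is the delicate part.
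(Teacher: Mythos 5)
There is a genuine gap in the forward direction. Your whole argument funnels through the claim that $\mathcal{M}\bigl((Tx)\otimes x + x\otimes(T^*x)\bigr)=x\otimes x$ forces $Tx\in\mathrm{span}\{x\}$ and $T^*x\in\mathrm{span}\{x\}$, and you yourself flag this as the main obstacle without resolving it: options (a) and (b) are plans, not proofs. Moreover, the range/corange heuristic you lean on is not valid as stated. Writing $S=T\circ P$ with polar decomposition $S=V|S|$, one only has the inclusions $\mathcal{N}(S)\subseteq\mathcal{N}(\mathcal{M}(S))$ and $\mathcal{R}(\mathcal{M}(S))\subseteq\overline{\mathcal{R}(S)}+\overline{\mathcal{R}(S^{*})}$; the equality $\mathcal{M}(S)=P$ therefore does not pin the range and corange of $S$ down to $\mathrm{span}\{x\}$, so the intended conclusion ``every $x$ is an eigenvector of $T$ and $T^{*}$'' does not follow from what you have written. (It is also a much stronger per-$x$ statement than the lemma requires.)

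The paper's proof sidesteps all of this with a one-line trace identity: since $\mathcal{M}(S)=\frac{1}{2}(V|S|+|S|V)$ and the trace of a product of finite-rank (or trace-class) factors is invariant under cyclic permutation, $Tr(\mathcal{M}(S))=Tr(V|S|)=Tr(S)$. Hence for every unit vector $x$ and $P=x\otimes x$ one gets $\langle Tx,x\rangle=Tr(T\circ P)=Tr(\mathcal{M}(T\circ P))=Tr(P)=1$, so $\langle Tx,x\rangle=\|x\|^{2}$ for all $x\in H$, and on a complex Hilbert space polarization gives $T=I$. Your ``if'' direction and the final step ($T=\alpha I\Rightarrow\alpha=1$ via $\mathcal{M}(\alpha P)=\alpha P$) are fine, but to repair the proof you should replace the range analysis by this trace observation; as it stands the central implication is unproved.
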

\begin{proof}
Clearly, if $T=I$ then for every rank one projection $P$, we have $\mathcal{M}(T\circ P)=P$. So we need to show the direct implication. Indeed, 
let $x\in H$ be a unit vector, and $P=x\otimes x$. Let $S=T\circ P$, it is clear that $S$ is an operator of at most of rank two. So $\mathcal{M}(S)$ is at most of rank four. Using the functional trace $Tr$ well defined on the set of finite rank operator, then we get  $Tr(S)=Tr(\mathcal{M}(S))=Tr(P)=1$, thus $\langle Tx,x\rangle=Tr(T\circ P)=Tr(S)=1$. Hence we conclude that the numeral range  $\langle Tx,x\rangle =\|x\|^2$ for all $x\in H$, and thus $T=I$. 
\end{proof}

\section{ The proof of main theorem  } 
 
 The proof of  Theorem \ref{th1} will be divided in many intermediary lemmas and it  will be given at the end of this section.
  Along of this section, we assume that 
   $\Phi :\B(H)\to\B(K)$ be a bijective map, which satisfies the  condition : 
 \begin{equation}\label{c}
 \mathcal {M}(\Phi(A)\circ\Phi(B))=\Phi(\mathcal{M}(A\circ B)) \;\; \text{for all}\;\; A, B \in \B(H).
 \end{equation}

 As an immediate consequence of (\ref{c}) and Lemma \ref{L1} and Theorem \ref{L3}, we derive the following result :
 
 \begin{lem}\label{L4} The following two statements are hold :
 \begin{enumerate}[(i)]
 \item $\Phi(0)=0$,
 \item $\Phi (I)= I$.
 \end{enumerate}
 \end{lem}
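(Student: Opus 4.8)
The plan is to extract both identities directly from the functional equation \eqref{c} by specializing $A$ and $B$ to operators whose Jordan product has a mean transform we already understand. For part (i), I would set $A=B=0$. Then $A\circ B = 0$, so $\mathcal{M}(A\circ B)=\mathcal{M}(0)=0$ by Proposition \ref{pp2}, and \eqref{c} gives $\mathcal{M}(\Phi(0)\circ\Phi(0))=\Phi(0)$. Now $\Phi(0)\circ\Phi(0)=\Phi(0)^2$, so this reads $\mathcal{M}(\Phi(0)^2)=\Phi(0)$; by Theorem \ref{thm1} this forces $\Phi(0)$ to be an orthogonal projection. To upgrade "projection" to "zero," I would use surjectivity of $\Phi$: pick $A$ with $\Phi(A)=0$ is not immediately available, so instead observe that for any $B$, taking the pair $(A,B)$ with $A$ chosen so that $A\circ B$ has zero mean transform — concretely, take $A=0$, giving $\mathcal{M}(\Phi(0)\circ\Phi(B)) = \Phi(\mathcal{M}(0)) = \Phi(0)$ for every $B$. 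Since $\Phi$ is onto, $\Phi(B)$ ranges over all of $\B(K)$, so $\mathcal{M}(\Phi(0)\circ S)=\Phi(0)$ for all $S\in\B(K)$; writing $\Phi(0)=P$ a projection, this says $\mathcal{M}(P\circ S)=P$ for every $S$. Hmm — this is close to Lemma \ref{L3} but with $P$ a fixed rank-arbitrary projection and $S$ free rather than $P$ free and $S=I$. The cleaner route: from $\mathcal{M}(P\circ S)=P$ with $S=0$ we get $\mathcal{M}(0)=P$, i.e. $P=0$. That settles (i).

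For part (ii), with $\Phi(0)=0$ in hand, I would substitute $B=I$ and let $A$ be arbitrary. Then $A\circ I = \tfrac12(AI+IA)=A$, so $\mathcal{M}(A\circ I)=\mathcal{M}(A)$ and \eqref{c} becomes $\mathcal{M}(\Phi(A)\circ\Phi(I))=\Phi(\mathcal{M}(A))$. I want to feed this into Lemma \ref{L3}, which characterizes $\Phi(I)$ as the identity once I know $\mathcal{M}(\Phi(I)\circ Q)=Q$ for every rank-one projection $Q$. So the task reduces to: given a rank-one projection $Q\in\B(K)$, produce $A\in\B(H)$ with $\Phi(A)=Q$ (possible by surjectivity) and such that $\Phi(\mathcal{M}(A))$ equals that same $Q$. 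For the latter I would show that such an $A$ can be taken to be itself a rank-one projection $P$ in $\B(H)$: by Lemma \ref{Lem1}, $\mathcal{M}(P)$ is a projection iff $P$ is, and then $\mathcal{M}(P)=P$, so $\Phi(\mathcal{M}(P))=\Phi(P)=Q$. But I still must know that the preimage of a rank-one projection under $\Phi$ is a rank-one projection — this is exactly the kind of structural fact that should follow from \eqref{c} combined with Theorem \ref{thm1} (projections are characterized by $\mathcal{M}(T^2)=T$, a condition preserved by $\Phi$ when we plug $A=B=T$ into \eqref{c}: $\mathcal{M}(\Phi(T)^2)=\Phi(\mathcal{M}(T^2))=\Phi(T)$ exactly when $\mathcal{M}(T^2)=T$, so $\Phi$ maps projections bijectively onto projections). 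The rank-one-ness would then come from an order/minimality argument among projections, which may need a short separate observation.

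The main obstacle is the last point just raised: matching a rank-one projection $Q$ in $K$ with a rank-one projection $P$ in $H$ satisfying $\Phi(P)=Q$. That $\Phi$ restricts to a bijection of the projection lattices is immediate from Theorem \ref{thm1} as above, but preservation of \emph{rank} is not automatic from the Jordan-product identity alone at this stage — it typically requires identifying the order structure, e.g. that $P\leq P'$ among projections can be detected via the mean transform of $P\circ P'$. I would expect to prove a small auxiliary claim that $\Phi$ preserves the order on projections (hence minimal projections, hence rank one), and then Lemma \ref{L3} closes part (ii) cleanly. If instead the paper intends Lemma \ref{L4}(ii) to use only the already-proven $\Phi(0)=0$ together with a direct substitution that bypasses rank considerations, the alternative is to note that $\mathcal{M}(\Phi(I)\circ\Phi(I)) = \mathcal{M}(\Phi(I)^2) = \Phi(\mathcal{M}(I)) = \Phi(I)$ forces $\Phi(I)$ to be a projection by Theorem \ref{thm1}, and then separately exploit that $\Phi(I)$ must act as a two-sided unit for the relevant operation to conclude $\Phi(I)=I$; I would try the substitution $A=I$, $B$ arbitrary, yielding $\mathcal{M}(\Phi(I)\circ\Phi(B))=\Phi(\mathcal{M}(B))$, and combine it with surjectivity to pin down $\Phi(I)$.
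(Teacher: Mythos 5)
Part (i) of your proposal is correct and, after an unnecessary detour through Theorem \ref{thm1} (you do not need to know $\Phi(0)$ is a projection), lands on exactly the paper's argument: use surjectivity to produce $B$ with $\Phi(B)=0$, plug the pair $(0,B)$ into \eqref{c}, and read off $\Phi(0)=\mathcal{M}(\Phi(0)\circ 0)=0$.

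Part (ii) has a genuine gap. You correctly obtain that $T=\Phi(I)$ is an orthogonal projection from $\mathcal{M}(T^2)=T$ and Theorem \ref{thm1}, which matches the paper's first step, but neither of your two routes from ``projection'' to ``identity'' is actually carried out. Your first route needs, for every rank-one projection $Q\in\B(K)$, a $\Phi$-preimage that is itself a rank-one projection; you concede this requires an order/minimality analysis of the projection lattice that you do not supply. (It can in fact be done before knowing $\Phi(I)=I$, using Theorem \ref{thm1} and Lemma \ref{Lem1}, but in the paper this material is Lemma \ref{L5}(iii)--(vii) and is proved \emph{after} Lemma \ref{L4}, so you would be importing a nontrivial auxiliary lemma, not a ``short separate observation.'') Your second route --- ``exploit that $\Phi(I)$ must act as a two-sided unit \dots\ combine with surjectivity to pin down $\Phi(I)$'' --- is a statement of intent, not an argument. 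The paper's actual mechanism is different and more elementary: to show the projection $T$ is injective (hence equal to $I$), take $y\in K$ with $Ty=0$, so also $T^*y=0$; by surjectivity choose $B$ with $\Phi(B)=y\otimes y$; then $\Phi(\mathcal{M}(B))=\mathcal{M}(T\circ(y\otimes y))=\mathcal{M}\bigl(\tfrac12(Ty\otimes y+y\otimes T^*y)\bigr)=0$, and injectivity of $\Phi$ together with part (i) and Proposition \ref{pp2} forces $B=0$, hence $y\otimes y=0$, hence $y=0$. This kernel-testing step, applied to preimages of the rank-one operators $y\otimes y$ built from vectors in $\mathcal{N}(\Phi(I))$, is the missing idea in your proposal.
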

 \begin{proof} 
(i).  Since $\Phi$ is onto, there is  $A\in \B(H)$ such that $\Phi(A)=0$. By  (\ref{c}) we have  
 $$0=\mathcal{M}((\Phi(0)\circ \Phi(A))=\Phi(0).$$
 Therefore, $\Phi(0)=0$.\\
(ii)  For  simplicity, let us denote $T = \Phi(I)$. 
We  take $A=B=I$ in (\ref{c}), we get that 
\begin{equation}\label{e9}
\mathcal{M}(T^2)=\Phi(I)=T.
\end{equation}
Using Theorem \ref{thm1},  we get that $T^2=T=T^*$. 

To complete the proof, it must to show $T$ is injective. Pick a   $y\in K$ such that $Ty=0$, we have also $T^*y=0$. Since $\Phi$ is onto,  there exists $B\in\B(H)$ such that
 $\Phi(B)=y\otimes y$. By  (\ref{c}), we get 
  $$0=\mathcal{M}(\frac{1}{2}(Ty\otimes y+y\otimes T^*y )= \mathcal{M} (\Phi(I) \Phi(B)) =\Phi(\mathcal{M}(B)).$$
Since $\Phi$ is bijective and $\Phi(0) = 0$,  we  have  $\mathcal{M}(B)=0$. By Proposition \ref{pp2},  we get that  $B=0$. Therefore $y\otimes y= \Phi(B)=0$ and  $y=0$. Hence $T$ is an injective projection, then $\Phi(I)=T = I$.  
 \end{proof}
 \bigskip
   As a consequence of the previous lemma, we get the following result.
    \begin{lem}\label{L5} Let $\Phi:\B(H)\to\B(K)$ be a bijective map satisfying  (\ref{c}). Then
\begin{enumerate}[(i)]
\item$\mathcal{M}(\Phi(B))=\Phi(\mathcal{M}(B))$, for all $B\in\B(H)$.\\
 In particular, $\Phi$ preserves the set of quasi-normal operators in both directions. 
\item $\Phi (A^2)=(\Phi (A))^2$ for all  $A$ quasi-normal.
\item $\Phi $ preserves the set of  orthogonal projections in both directions.
\item $\Phi $ preserves the orthogonality between the projections:
$$P\perp Q \Leftrightarrow \Phi (P)\perp\Phi (Q).$$
\item $\Phi $ preserves the order relation between  projections in both directions:
 $$Q\leq P \Leftrightarrow \Phi (Q) \leq \Phi  (P).$$
\item  $\Phi (P+Q)=\Phi (P)+\Phi (Q)$ for all orthogonal projections $P,Q$ such that $P\perp Q$. 
\item  $\Phi $ preserves the set of  rank one projections in  both directions.
\end{enumerate}
 \end{lem}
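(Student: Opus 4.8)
The plan is to establish the seven items of Lemma~\ref{L5} essentially in the order listed, bootstrapping each from the previous ones and from the auxiliary facts already proved in Section~2.

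\textbf{Item (i).} I would simply specialize $(\ref{c})$ by taking $A=I$ and using Lemma~\ref{L4}(ii): since $\Phi(I)=I$ and $I\circ B=\frac12(B+B)=B$, we get $\mathcal{M}(\Phi(I)\circ\Phi(B))=\mathcal{M}(I\circ\Phi(B))=\mathcal{M}(\Phi(B))$ on the left, while the right side is $\Phi(\mathcal{M}(I\circ B))=\Phi(\mathcal{M}(B))$. The ``in particular'' follows because the quasi-normal operators are precisely the fixed points of $\mathcal{M}$ (stated in the introduction), and $\Phi$ is a bijection; if $A=\mathcal{M}(A)$ then $\Phi(A)=\Phi(\mathcal{M}(A))=\mathcal{M}(\Phi(A))$, and conversely using surjectivity and injectivity.

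\textbf{Items (ii)--(iii).} For (ii), apply $(\ref{c})$ with $A=B$ and use item (i): $\mathcal{M}(\Phi(A)^2)=\mathcal{M}(\Phi(A)\circ\Phi(A))=\Phi(\mathcal{M}(A\circ A))=\Phi(\mathcal{M}(A^2))$; when $A$ is quasi-normal so is $A^2$ (this is standard, or can be derived from $\mathcal{M}(A^2)$ commuting through), hence $\mathcal{M}(A^2)=A^2$ and the right side is $\Phi(A^2)$, while by item (i) applied to $\Phi(A)^2$... here one must be slightly careful, so the cleaner route is: $\mathcal{M}(\Phi(A)^2)=\Phi(A^2)$, and since $\Phi(A^2)$ is quasi-normal (being $\Phi$ of a quasi-normal operator, by (i)) it is fixed by $\mathcal{M}$, so $\mathcal{M}(\Phi(A^2))=\Phi(A^2)=\mathcal{M}(\Phi(A)^2)$; but also $\mathcal{M}(\Phi(A)^2)$ must be compared with $\Phi(A)^2$ — to conclude $\Phi(A)^2=\Phi(A^2)$ I use that $\Phi(A)$ is quasi-normal, hence $\Phi(A)^2$ is quasi-normal, hence $\mathcal{M}(\Phi(A)^2)=\Phi(A)^2$, giving $\Phi(A)^2=\Phi(A^2)$. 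For (iii): $P$ is an orthogonal projection iff $P=P^2$ and $P$ is self-adjoint; more usefully, by Theorem~\ref{thm1}, $P$ is a projection iff $\mathcal{M}(P^2)=P$, equivalently (since projections are quasi-normal so $P^2$ is too) iff $P^2=P$ with the positivity built in. Taking $A=B=P$ in $(\ref{c})$ gives $\mathcal{M}(\Phi(P)^2)=\Phi(\mathcal{M}(P^2))=\Phi(P)$, so by Theorem~\ref{thm1} $\Phi(P)$ is a projection; the reverse direction uses surjectivity of $\Phi$ the same way.

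\textbf{Items (iv)--(vii).} For (iv), recall $P\perp Q$ means $PQ=QP=0$, equivalently $P\circ Q=0$; then by $(\ref{c})$ and Proposition~\ref{pp2}, $\mathcal{M}(\Phi(P)\circ\Phi(Q))=\Phi(\mathcal{M}(P\circ Q))=\Phi(0)=0$ iff $\Phi(P)\circ\Phi(Q)=0$ iff $\mathcal{M}(P\circ Q)=0$ iff $P\circ Q=0$; since $\Phi(P),\Phi(Q)$ are projections, $\Phi(P)\circ\Phi(Q)=0$ is equivalent to $\Phi(P)\perp\Phi(Q)$. For (v), write $Q\leq P$ for projections as $PQ=QP=Q$, i.e.\ $P\circ Q=Q$; then $\mathcal{M}(\Phi(P)\circ\Phi(Q))=\Phi(Q)$ by $(\ref{c})$, and since $\Phi(Q)$ is a projection hence quasi-normal, $\mathcal{M}$ has it as a fixed point; the equation $\mathcal{M}(\Phi(P)\circ\Phi(Q))=\Phi(Q)$ together with both being projections should force $\Phi(P)\circ\Phi(Q)=\Phi(Q)$ — this is the step that needs a short argument, probably via Lemma~\ref{Lem1} (if $\mathcal{M}$ of something equals a projection, that something is that projection) applied to $T=\Phi(P)\circ\Phi(Q)$, after checking $\Phi(P)\circ\Phi(Q)$ is again close enough to a projection; alternatively decompose $Q=\sum$ rank-ones and reduce to (vii). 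For (vi): given $P\perp Q$, the sum $P+Q$ is a projection, and $(P+Q)\circ R$ relates to $P\circ R+Q\circ R$; I would pick a third projection or use $(\ref{c})$ with $A=P+Q$ against suitable test operators, combined with (iii)--(v), to pin down $\Phi(P+Q)=\Phi(P)+\Phi(Q)$; the orthogonality $\Phi(P)\perp\Phi(Q)$ from (iv) guarantees the right side is a projection of the right rank. Finally (vii): rank-one projections are exactly the minimal nonzero projections in the order $\leq$, and this property is preserved by (iii) and (v) since $\Phi$ is an order isomorphism on projections in both directions.

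\textbf{Main obstacle.} The routine parts are the specializations of $(\ref{c})$; the delicate points are (a) justifying that $\Phi$ being quasi-normal-preserving lets one strip $\mathcal{M}$ off expressions like $\mathcal{M}(\Phi(P)\circ\Phi(Q))$ to recover the plain product — this relies on Lemma~\ref{Lem1} and Theorem~\ref{thm1} and must be invoked carefully each time the inner argument need not literally be a projection but only have projection-like image under $\mathcal{M}$ — and (b) the additivity in (vi), where one genuinely has to produce enough test operators $B$ to separate points, the natural choice being rank-one projections $Q'$ with prescribed position relative to $P,Q$, so that (vii) and (v) can be leveraged; I expect (vi) to be the step requiring the most care.
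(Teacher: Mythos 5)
Your treatment of items (i)--(v) and (vii) is essentially the paper's own proof: (i) by specializing (\ref{c}) at the identity, (ii) by exploiting that quasi-normal operators are the fixed points of $\mathcal{M}$ and that squares of quasi-normal operators are again quasi-normal, (iv) via Proposition \ref{pp2}, (v) via Lemma \ref{Lem1}, and (vii) via minimality in the projection order. Two small remarks: in (iii) you invoke Theorem \ref{thm1} where the paper deduces the claim from (ii) together with the fact that an idempotent quasi-normal operator is an orthogonal projection --- both routes are fine; and in (v) your hedge about $\Phi(P)\circ\Phi(Q)$ being ``close enough to a projection'' is unnecessary, since Lemma \ref{Lem1} says precisely that if $\mathcal{M}(T)$ is a projection then $T$ is that projection, which applies verbatim to $T=\Phi(P)\circ\Phi(Q)$ once you know $\mathcal{M}(T)=\Phi(Q)$. (You do still owe the reader the elementary fact that, for projections $E,F$, the identity $E\circ F=F$ is equivalent to $F\le E$; the paper glosses over this as well.)

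The genuine gap is item (vi), which you leave as ``pick a third projection or use (\ref{c}) with $A=P+Q$ against suitable test operators''; no test-operator computation is needed, and as stated your plan does not actually produce the equality. The paper's argument is purely order-theoretic and uses only what you have already established. Since $P\le P+Q$ and $Q\le P+Q$, item (v) gives $\Phi(P)\le\Phi(P+Q)$ and $\Phi(Q)\le\Phi(P+Q)$; by item (iv), $\Phi(P)\perp\Phi(Q)$, so $\Phi(P)+\Phi(Q)$ is a projection, and two mutually orthogonal projections both dominated by the projection $\Phi(P+Q)$ satisfy $\Phi(P)+\Phi(Q)\le\Phi(P+Q)$. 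For the reverse inequality one runs the identical argument for $\Phi^{-1}$ (which satisfies the same hypothesis (\ref{c})) applied to the orthogonal projections $\Phi(P)$ and $\Phi(Q)$: this yields $P+Q\le\Phi^{-1}\bigl(\Phi(P)+\Phi(Q)\bigr)$, hence $\Phi(P+Q)\le\Phi(P)+\Phi(Q)$ by (v) again, and equality follows. You had all the ingredients --- (iv), (v), and the bidirectionality coming from $\Phi^{-1}$ --- but the missing idea was to recognize $P+Q$ as the supremum of $P$ and $Q$ in the projection order and let the order isomorphism do the work, rather than hunting for additional identities from (\ref{c}).
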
 
 \begin{proof}
 (i). Taking $B=I$ in (\ref{c}), then we get $\mathcal{M}(\Phi(B))=\Phi(\mathcal{M}(B))$. Since the quasi-normal operators are exactly the fixed point of the mean transform, then  (i) holds.
 
 (ii). Let $A$ be a quasi-normal operator. Since $\Phi$ preserves the set of quasi-normal operators, then 
  $\Phi(A), \Phi(A^2)$ and $ (\Phi(A))^2$ are also quasi-normal. By (\ref{c}) with $B=A$,  we get 
  $\mathcal{M}((\Phi(A))^2)=\Phi(\mathcal{M}(A^2))$. 
  Hence  $(\Phi(A))^2=\Phi(A^2)$, it is also the fact that  the quasi-normal operators are the  fixed point of $\mathcal{M}$.
  
 (iii). It is an immediate consequence of (ii) and the fact that a   idempotent  quasi-normal  is  orthogonal projection.
 
 Throughout the remaining of the proof $P$ and $Q$ are orthogonal projections.
  
 (iv). Assume that $P,Q$ are orthogonal (i.e. $P Q = 0$ this equivalent also  to $P\circ Q=0$). Since $\Phi$ preserves the set of  orthogonal projections, then $\Phi(P), \Phi(Q)$ are orthogonal projection. By condition (\ref{c}), we get  
 $$\mathcal{M}(\Phi(P)\circ\Phi(Q))=\Phi(\mathcal{M}(P\circ Q))=\Phi(0)=0.$$
 Thus $\Phi(Q)\circ \Phi(P)=0$. The converse holds since the inverse $\Phi^{-1}$  satisfies the same condition as $\Phi$.
 
(v). Suppose that $Q \leq P$ or equivalently $Q\circ P=P \circ Q=Q$. Then by (\ref{c}), we get 
\begin{equation}\label{eq9}
\Phi(Q)\circ \Phi(P)=\mathcal{M}(\Phi(P)\circ \Phi(Q))=\Phi(Q).
\end{equation}
Since $\Phi^{-1}$ has  the same assumption as $\Phi$, then we deduce that  $\Phi$ preserves the order relation between the orthogonal projections in  both directions.

  (vi). We have $P,Q\leq P+Q$. Then $\Phi(P),\Phi(Q) \leq \Phi(P+Q)$. So $\Phi(P)+\Phi(Q)\leq \Phi(P+Q)$. Since $\Phi$ and $\Phi^{-1}$ both satisfy the same conditions, it follows that $\Phi(P)+\Phi(Q)=\Phi(P+Q)$.
 
 (vii). Let $P=x\otimes x$ be a rank one projection. Then $\Phi(P)$ is a non zero projection. Let $y\in K$ be a unit vector such that $y\otimes y \leq \Phi(P)$.  
 Thus $\Phi^{-1}(y\otimes y) \leq P$.  Since $P$ is a minimal projection and $\Phi^{-1}(y\otimes y)$ is a non zero projection, then $\Phi^{-1}(y\otimes y)= P$. Therefore  $\Phi(P)=y\otimes y$ is a rank one projection. This complete the proof.
 \end{proof}

 \begin{lem} There exists a bijective multiplicative function $h:\C\to\C$ such that :
 for every quasi-normal $A\in \B(H)$ and $\alpha \in \C$, we have 
 $\Phi(\alpha A)=h(\alpha)\Phi(A)$. 
 
 In particular, $h(0)=0$, $h(1)=1$ and $h(-1)=-1$.
 \end{lem}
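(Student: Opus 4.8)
The plan is to extract $h$ first from the scalar multiples of rank-one projections, and then propagate it. Fix a rank-one projection $P=x\otimes x$; by Lemma \ref{L5}(vii), $Q:=\Phi(P)$ is again a rank-one projection, say $Q=y\otimes y$. \emph{Step 1.} Since $\alpha P$ is normal, hence quasi-normal, $\Phi(\alpha P)$ is quasi-normal by Lemma \ref{L5}(i). For every rank-one projection $P'\perp P$ one has $\alpha P\circ P'=0$, so $(\ref{c})$ together with Proposition \ref{pp2} gives $\Phi(\alpha P)\circ\Phi(P')=0$. As $P'$ runs over the rank-one projections orthogonal to $P$, the unit vector $y'$ with $\Phi(P')=y'\otimes y'$ runs over all unit vectors orthogonal to $y$ (here one uses that $\Phi^{-1}$ also preserves rank-one projections and orthogonality, Lemma \ref{L5}). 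Writing out $\Phi(\alpha P)\circ(y'\otimes y')=0$ and evaluating at $y'$ forces $\Phi(\alpha P)y'=0=\Phi(\alpha P)^*y'$ for all $y'\perp y$; hence $\mathcal{R}(\Phi(\alpha P))\subseteq\C y$ and $\mathcal{R}(\Phi(\alpha P)^*)\subseteq\C y$, so $\Phi(\alpha P)=h_P(\alpha)\,Q$ for a unique scalar $h_P(\alpha)$, with $h_P(0)=0$ (Lemma \ref{L4}) and $h_P(\alpha)\neq0$ for $\alpha\neq0$ since $\Phi$ is injective.

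\emph{Step 2.} The map $h_P$ is injective because $\Phi$ is, and surjective by applying Step 1 to $\Phi^{-1}$ and $Q$ (every $\mu Q$ equals $\Phi(\beta P)$ for some $\beta$, i.e. $h_P(\beta)=\mu$). Taking $A=\alpha P$, $B=\beta P$ in $(\ref{c})$ and using $\alpha P\circ\beta P=\alpha\beta P$ together with the fact that scalar multiples of projections are quasi-normal, hence fixed by $\mathcal{M}$, the right side is $\Phi(\alpha\beta P)=h_P(\alpha\beta)Q$ and the left side is $\mathcal{M}(h_P(\alpha)h_P(\beta)\,Q)=h_P(\alpha)h_P(\beta)Q$; thus $h_P$ is multiplicative. \emph{Step 3 (independence of $P$).} Let first $P\perp P'$ be rank-one, put $R=P+P'$ and $S=\Phi(\alpha R)$. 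The argument of Step 1 with $R$ in place of $P$ shows $S=\Phi(R)S\Phi(R)$, so $S$ acts on the two-dimensional space $\mathcal{R}(\Phi(R))=\mathcal{R}(\Phi(P))\oplus\mathcal{R}(\Phi(P'))$. For any rank-one projection $\hat P\le R$ one has $\alpha R\circ\hat P=\alpha\hat P$, so with $\hat Q=\Phi(\hat P)$, $\mathcal{M}(S\circ\hat Q)=\Phi(\alpha\hat P)=h_{\hat P}(\alpha)\hat Q$, with $h_{\hat P}(\alpha)\neq0$. Here one invokes the $2\times2$ fact: if $M$ acts on a two-dimensional space and $\mathcal{M}(M)=\lambda\hat Q$ with $\lambda\neq0$ and $\hat Q$ a rank-one projection, then $M=\lambda\hat Q$ (if $M$ has rank $\le1$ this is Proposition \ref{p1}; if $M$ is invertible, a determinant computation shows $\mathcal{M}(M)$ can have rank $\le1$ only when $M$ is normal, in which case $\mathcal{M}(M)=M$ has rank $2$, a contradiction). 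Applying it to $M=S\circ\hat Q$ gives $S\circ\hat Q=h_{\hat P}(\alpha)\hat Q$ for every such $\hat Q$; evaluating this shows that the range of $\hat Q$ is an eigenline of both $S$ and $S^*$, and since $\hat Q$ is arbitrary, $S$ is diagonal in every orthonormal basis of $\mathcal{R}(\Phi(R))$, hence $S=\mu\,\Phi(R)$. Then $\mathcal{M}(S\circ\Phi(P))=\mu\Phi(P)$ while also $\mathcal{M}(S\circ\Phi(P))=\Phi(\alpha R\circ P)=\Phi(\alpha P)=h_P(\alpha)\Phi(P)$, so $\mu=h_P(\alpha)$, and by symmetry $\mu=h_{P'}(\alpha)$. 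For arbitrary rank-one $P,P'$, since $\dim H\ge3$ there is a rank-one projection $P''$ with range orthogonal to $\mathcal{R}(P)+\mathcal{R}(P')$; then $h_P=h_{P''}=h_{P'}$. Call the common map $h$: it is bijective and multiplicative, $h(0)=0$, $h(1)=1$ (take $\alpha=1$), and $h(-1)^2=h(1)=1$ with $h(-1)\neq h(1)$ forces $h(-1)=-1$.

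\emph{Step 4 (all quasi-normal operators).} Rerunning the corner argument of Step 3 with an arbitrary orthogonal projection $R$ yields $\Phi(\alpha R)=h(\alpha)\Phi(R)$ for every projection. For a general quasi-normal $A$, both $\Phi(\alpha A)$ and $h(\alpha)\Phi(A)$ are quasi-normal (Lemma \ref{L5}(i)), and for each rank-one projection $P=x\otimes x$ one has $\mathcal{M}(\Phi(\alpha A)\circ\Phi(P))=\Phi(\alpha\,\mathcal{M}(A\circ P))$ and $\mathcal{M}(h(\alpha)\Phi(A)\circ\Phi(P))=h(\alpha)\Phi(\mathcal{M}(A\circ P))$; using the identity $\mathrm{Tr}\,\mathcal{M}(X\circ P)=\langle Xx,x\rangle$ (valid since $X\circ P$ is finite rank and $\mathrm{Tr}$ is $\mathcal{M}$-invariant), which shows an operator is determined by the mean transforms of its Jordan products with rank-one projections, the desired equality $\Phi(\alpha A)=h(\alpha)\Phi(A)$ reduces to the same identity for the finite-rank operators $W=\mathcal{M}(A\circ P)$, which one settles by induction on the rank via the corner/$2\times2$ analysis again. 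I expect the main obstacle to be Step 3: that is exactly where the hypothesis $\dim H\ge3$ is genuinely used, and where one must control $2\times2$ mean transforms precisely; the reduction in Step 4 to operators with little or no point spectrum is the secondary technical point.
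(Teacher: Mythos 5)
Your Steps 1--3 are correct, and they take a genuinely different (arguably cleaner) route than the paper to the identity $\Phi(\alpha P)=h_P(\alpha)\Phi(P)$ for rank-one projections $P$ and to the independence of $h_P$ from $P$: the paper instead pins down $\Phi(\alpha P)$ by a norm/trace equality-case argument in $\mathcal{M}(T\circ \Phi(P))=T$, and glues the various $h_P$ together through the single operator $\Phi(\alpha I)$. The genuine gap is in Step 4, in two places. First, once $R$ is a projection of rank $\geq 3$ (in particular $R=I$), the operator $M=S\circ\hat Q=\tfrac12(Sw\otimes w+w\otimes S^*w)$ is no longer of the form $EME$ for a rank-two projection $E$: its range lies in $\mathrm{span}\{Sw,w\}$ while the range of $M^*$ lies in $\mathrm{span}\{S^*w,w\}$, which is in general a different plane, so the dichotomy ``rank $\leq 1$ or invertible on a two-dimensional space'' on which your $2\times2$ fact rests is unavailable. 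This part is repairable from the paper's own Lemma \ref{Lem1}: since $\mathcal{M}$ is homogeneous ($\mathcal{M}(cT)=c\,\mathcal{M}(T)$, the polar data of $cT$ being $\tfrac{c}{|c|}V$ and $|c|\,|T|$), the relation $\mathcal{M}(M)=\lambda\hat Q$ with $\lambda\neq0$ says $\mathcal{M}(M/\lambda)$ is a projection, hence $M/\lambda=\hat Q$ by Lemma \ref{Lem1}; this yields $\Phi(\alpha I)=h(\alpha)I$ with no rank restriction.

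Second, and more seriously, your passage from projections to a general quasi-normal $A$ does not close. The trace identity $\mathrm{Tr}\,\mathcal{M}(X\circ\hat Q)=\langle Xw,w\rangle$ does show that $\Phi(\alpha A)=h(\alpha)\Phi(A)$ would follow from $\Phi(\alpha W)=h(\alpha)\Phi(W)$ for the operators $W=\mathcal{M}(A\circ P)$; but these $W$ are arbitrary rank-$\leq2$ operators, generally neither quasi-normal nor multiples of projections, so nothing established in Steps 1--3 (nor ``the corner analysis again'') applies to them --- you have reduced the claimed statement to a strictly stronger one. The step is in fact a one-liner once $\Phi(\alpha I)=h(\alpha)I$ is in hand, and this is how the paper concludes: for quasi-normal $A$ the operator $\alpha A$ is again quasi-normal, hence $\alpha A=\mathcal{M}(\alpha I\circ A)$, and therefore
\begin{equation*}
\Phi(\alpha A)=\Phi(\mathcal{M}(\alpha I\circ A))=\mathcal{M}(\Phi(\alpha I)\circ\Phi(A))=\mathcal{M}(h(\alpha)\Phi(A))=h(\alpha)\Phi(A),
\end{equation*}
the last equality holding because $h(\alpha)\Phi(A)$ is quasi-normal (Lemma \ref{L5}) and quasi-normal operators are fixed by $\mathcal{M}$.
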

 
 \begin{proof}Let $x\in H$ be a unit vector and $\alpha \in \C$. Let $T=\Phi(\alpha x\otimes x)$. By the assumption and condition (\ref{c}), we have 
  $$\mathcal{M}(\Phi(\alpha x\otimes x)\circ x\otimes x))=\Phi(\alpha x\otimes x).$$
   Therefore   
 \begin{equation}\label{ec}
 \mathcal{M}(Tx\otimes x+ x\otimes T^*x)=2T.
 \end{equation}
 Taking the norm and  using the triangular inequality, we get 
 $$2\|T\|=\|\mathcal{M}(Tx\otimes x+ x\otimes T^*x)\| \leq \|Tx\otimes x+ x\otimes T^*x\|\leq \|Tx\|+\|T^*x\|\leq 2\|T\|.$$
 Which implies the following, $$ 2\|T\|=\|Tx\otimes x+ x\otimes T^*x\| , \;\text{ and }\; \; \|T\|=\|Tx\|=\|T^*x\|.$$
Let denote $S=Tx\otimes x+ x\otimes T^*x$, form the preceding equality we get 
$$4\|T\|^2 =\|S\|^2 = \|S^*S\|=r(S^*S)\leq Tr(S^*S).$$
On the other hand, we calculate $S^*S$ then  
$$S^*S=\big{(}\|Tx\|^2x\otimes x+<x,Tx>x\otimes T^*x+<Tx,x>T^*x\otimes x + T^* x\otimes T^*x\big{)}.$$
It follows that
\begin{eqnarray*}
4\|T\|^2 &\leq & Tr(S^*S)\\
&=& Tr\big{(}\|Tx\|^2x\otimes x+<x,Tx>x\otimes T^*x+<Tx,x>T^*x\otimes x + T^* x\otimes T^*x\big{)} \\
 & = & \|Tx\|^2+ 2|<Tx,x>|^2+\|T^*x\|^2 \\*
 & = & 2\|T\|^2+2|<Tx,x>|^2\leq 4\|T\|^2.
 \end{eqnarray*}
Therefore  $$\|T\|^2\leq |<Tx,x>|^2,\;\; \text{ and } \;\; \|T\|=|<Tx,x>|=\|Tx\|=\|T^*x\|.$$
By consequence, $$\|Tx-<Tx,x>x\|^2=\|Tx\|^2-|<Tx,x>|^2=0,$$ and thus $Tx=<Tx,x>x$. 
With the same way, $T^*x=<T^*x,x>x$. By equation (\ref{ec}), it follows that 
$$\Phi(\alpha x \otimes x )=T=<Tx,x>x\otimes x=h_x(\alpha)x\otimes x, $$
where $h_x(\alpha)=<Tx,x>$ for all $\alpha \in \R$. In particular $h_x(0)=0$ and $h_x(1)=1$.

Now, by condition (\ref{c}), we have for all unit vector $x\in H$, 
$$\mathcal{M}(\Phi(\alpha I)\circ x\otimes x)=\Phi(\alpha x\otimes x)=h_x(\alpha) x\otimes x.$$ 
By Lemma \ref{L3}, $\Phi(\alpha I)x\circ x=h_x(\alpha)x \circ x$, this implies that $\Phi(\alpha I)x$ and $x$ are colinear for all unit vector $x$ of $H$. By  classical arguments, $\Phi(\alpha I)$ is a scalar multiple of the identity, so  there exists a function $h:\C\to \C$ such that  $\Phi(\alpha I)=h(\alpha)I$ for all $\alpha \in \C$. By the assumption on $\Phi$, the function $h$ is bijective and multiplicative. Moreover $\Phi(\alpha A)=h(\alpha)\Phi(A)$ for every quasi-normal operator $A$. 
 \end{proof}

\begin{lem}\label{Lem21} Let $P, Q$ two orthogonal projections and $\alpha ,\beta \in \C$, then we have 
$$\Phi(\alpha P+ \beta Q)=h(\alpha)\Phi(P)+h(\beta)\Phi(Q).$$
\end{lem}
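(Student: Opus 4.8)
The plan is to reduce the statement to the two orthogonal components and use the additivity already available for orthogonal projections. Since $P \perp Q$, the operators $\alpha P$ and $\beta Q$ are ``orthogonal'' in the sense that they live on complementary subspaces, and one expects $\mathcal{M}((\alpha P + \beta Q)\circ R) $ to split nicely for a well-chosen test operator $R$. The natural first move is to pick $R$ so that $(\alpha P + \beta Q)\circ R$ decomposes, apply condition (\ref{c}), and then peel off each term using Lemma \ref{L4}, Lemma \ref{L5} (especially items (vi) and (vii)) and the scalar-homogeneity lemma $\Phi(\alpha A) = h(\alpha)\Phi(A)$ for quasi-normal $A$.

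Concretely, first I would treat the scalar factors: since $P$ and $Q$ are (quasi-normal) projections, $\Phi(\alpha P) = h(\alpha)\Phi(P)$ and $\Phi(\beta Q) = h(\beta)\Phi(Q)$ by the previous lemma. So the goal becomes $\Phi(\alpha P + \beta Q) = \Phi(\alpha P) + \Phi(\beta Q)$. To get this I would use condition (\ref{c}) with a test projection. Note $\alpha P + \beta Q$ is normal, in fact quasi-normal, so by Lemma \ref{L5}(i)--(ii) it behaves well under $\Phi$. Taking $B = P$ in (\ref{c}), $(\alpha P + \beta Q)\circ P = \alpha P + \tfrac{\beta}{2}(QP + PQ) = \alpha P$ since $P\perp Q$, so $\mathcal{M}((\alpha P + \beta Q)\circ P) = \mathcal{M}(\alpha P) = \alpha P$ because $\alpha P$ is quasi-normal. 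Hence (\ref{c}) yields $\mathcal{M}(\Phi(\alpha P + \beta Q)\circ \Phi(P)) = \Phi(\alpha P) = h(\alpha)\Phi(P)$, and similarly with $Q$. Writing $X = \Phi(\alpha P + \beta Q)$, $p = \Phi(P)$, $q = \Phi(Q)$ (orthogonal projections by Lemma \ref{L5}(iii)--(iv)), I have $\mathcal{M}(X\circ p) = h(\alpha)p$ and $\mathcal{M}(X\circ q) = h(\beta)q$, and I also know (taking $B = P+Q$) that $\mathcal{M}(X\circ(p+q)) = \Phi(\mathcal{M}((\alpha P+\beta Q)\circ(P+Q))) = \Phi(\alpha P + \beta Q) = X$, i.e.\ $X$ is ``supported'' on the range of $p + q$.

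From here the task is to show that $\mathcal{M}(X\circ p) = h(\alpha)p$ together with $\mathcal{M}(X\circ q) = h(\beta)q$ and the support condition force $X = h(\alpha)p + h(\beta)q$. I would decompose $K = \mathcal{R}(p)\oplus\mathcal{R}(q)\oplus(\mathcal{R}(p+q))^{\perp}$ and write $X$ as a $3\times 3$ operator matrix; the support condition $\mathcal{M}(X\circ(p+q)) = X$ kills the third row and column (using Proposition \ref{pp2} / the argument pattern of Lemma \ref{L4}(ii)). Then on $\mathcal{R}(p)\oplus\mathcal{R}(q)$, the equations $\mathcal{M}(X\circ p) = h(\alpha)p$ and $\mathcal{M}(X\circ q) = h(\beta)q$ constrain the block structure: $X\circ p$ has block form with the $\mathcal{R}(p)$-diagonal block equal to $X_{11}$ and off-diagonal blocks halved, and taking $\mathcal{M}$ of this should force $X_{11} = h(\alpha)I_{\mathcal{R}(p)}$ and the off-diagonal blocks of $X$ to vanish, and symmetrically $X_{22} = h(\beta)I_{\mathcal{R}(q)}$. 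I expect the main obstacle to be this last computation: carefully analyzing how $\mathcal{M}$ acts on a block operator of the form $\left(\begin{smallmatrix} A & B \\ C & 0\end{smallmatrix}\right)\circ\left(\begin{smallmatrix} I & 0 \\ 0 & 0\end{smallmatrix}\right)$ and extracting that the only way for the outcome to be a multiple of a projection is the rigid one. If a direct block computation proves awkward, an alternative is to test against rank-one projections $y\otimes y$ with $y$ a unit vector in $\mathcal{R}(p)$ and invoke Proposition \ref{p1} and Lemma \ref{L3}-style arguments to pin down $Xy$ for all such $y$, then do the same inside $\mathcal{R}(q)$, and finally combine. Either route should close the lemma.
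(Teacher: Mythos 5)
Your opening matches the paper exactly: you derive the three identities $\mathcal{M}(X\circ p)=h(\alpha)p$, $\mathcal{M}(X\circ q)=h(\beta)q$ and $\mathcal{M}(X\circ(p+q))=X$, using Lemma \ref{L5}(iii),(vi) and the scalar-homogeneity lemma. The gap is in how you propose to close. Your block-matrix plan is never executed, and as stated it would be genuinely hard to execute: to compute $\mathcal{M}$ of a non-normal $2\times 2$ (or $3\times 3$) block operator you need its polar decomposition, which is not tractable in the generality you would face, and the claim that ``taking $\mathcal{M}$ of this should force $X_{11}=h(\alpha)I$ and the off-diagonal blocks to vanish'' is precisely the assertion that needs proof. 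You flag this yourself as the main obstacle, and the alternative rank-one route is equally unproved. So the key step of the lemma is missing.

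The paper closes with two observations that make all of this unnecessary. First, it \emph{de-averages}: since $\mathcal{M}$ is homogeneous ($\mathcal{M}(cT)=c\,\mathcal{M}(T)$), the identity $\mathcal{M}(X\circ p)=h(\alpha)p$ with $h(\alpha)\neq 0$ gives $\mathcal{M}\bigl(h(\alpha)^{-1}X\circ p\bigr)=p$, and Lemma \ref{Lem1} then forces $h(\alpha)^{-1}X\circ p=p$, i.e.\ $X\circ p=h(\alpha)p$ as an \emph{operator} identity (the cases $\alpha=0$ or $\beta=0$ are dispatched separately by the preceding lemma). Likewise $X\circ q=h(\beta)q$. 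Second, it uses bilinearity of the Jordan product: $X\circ(p+q)=X\circ p+X\circ q=h(\alpha)p+h(\beta)q$, which is normal (hence fixed by $\mathcal{M}$), and by your third identity $\mathcal{M}(X\circ(p+q))=X$; therefore $X=h(\alpha)p+h(\beta)q$ immediately. No decomposition of $K$, no analysis of the complement of $\mathcal{R}(p+q)$, and no computation of $\mathcal{M}$ on block operators is needed. The two ideas you are missing are exactly these: upgrading the $\mathcal{M}$-identities to identities of the Jordan products themselves via Lemma \ref{Lem1}, and then letting linearity of $A\mapsto X\circ A$ assemble the answer.
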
 
 \begin{proof}
 If $\alpha =0 $ or $\beta =0$, then the result follows from preceding lemma. So suppose that 
 $\alpha \ne 0$ and $\beta \ne 0$. By Lemma \ref{L5} $\Phi(P)$ and $\Phi(Q)$ are also two orthogonal projections, using condition (\ref{c}), we get the following: 
 \begin{eqnarray*}
 \mathcal{M}(\Phi(\alpha P+\beta Q)\circ \Phi(P))&=& \Phi(\mathcal{M}({\alpha}P+\beta Q)\circ P))\\
 &=& \Phi(\alpha P)=h(\alpha)\Phi(P).
 \end{eqnarray*}
 Therefore $$ \mathcal{M}(\Phi(\alpha P+\beta Q)\circ \Phi(P))=h(\alpha)\Phi(P).$$
 By Lemma \ref{Lem1}, 
 \begin{equation}
 \Phi(\alpha P+\beta Q)\circ \Phi(P)=\Phi(\alpha P+\beta Q) \Phi(P)=\Phi(P)\Phi(\alpha P+\beta Q)= h(\alpha)\Phi(P). 
 \end{equation}
 Similarly, 
  \begin{equation}
\Phi(\alpha P+\beta Q)\circ \Phi(Q)=\Phi(\alpha P+\beta Q) \Phi(Q)=\Phi(Q)\Phi(\alpha P+\beta Q)= h(\beta)\Phi(Q). 
 \end{equation}
 Again the condition (\ref{c}) implies that 
 \begin{eqnarray*}
  \Phi(\alpha P+\beta Q)&=& \Phi(\mathcal{M}((\alpha P+\beta Q)\circ ( P+ Q)))\\
   &= &\mathcal{M}(\Phi(\alpha P+\beta Q)\circ \Phi( P+ Q))\\
   & =& \mathcal{M}(\Phi(\alpha P+\beta Q)\circ (\Phi( P) + \Phi(Q))\\
   &=&  \mathcal{M}(\Phi(\alpha P+\beta Q)\circ \Phi( P) +\Phi(\alpha P+\beta Q)\circ \Phi(Q))\\
   &=& \mathcal{M}(h(\alpha)\Phi(P)+ h(\beta)\Phi(Q))\\
   &=& h(\alpha)\Phi(P)+ h(\beta)\Phi(Q). 
 \end{eqnarray*}
 \end{proof}

 \begin{lem}The function $h:\C\to\C$ is a automorphism of complex fields $\C$.
 \end{lem}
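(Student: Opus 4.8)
The plan is to upgrade the multiplicative bijection $h:\C\to\C$, which we already know satisfies $h(0)=0$, $h(1)=1$, $h(-1)=-1$ and $h(\alpha\beta)=h(\alpha)h(\beta)$, to a field automorphism by proving it is additive. The standard strategy for such problems is to deduce additivity from the already-established additivity of $\Phi$ on orthogonal projections (Lemma \ref{Lem21}), choosing a cleverly arranged pair of orthogonal projections and a third operator so that applying condition (\ref{c}) forces the relation $h(\alpha+\beta)=h(\alpha)+h(\beta)$.

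\medskip

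First I would fix two orthogonal rank one projections $P=x\otimes x$ and $Q=y\otimes y$ with $x\perp y$ (here $\dim H\geq 3$ guarantees enough room, though rank one will suffice), and set $R=(x+y)\otimes(x+y)$ up to normalization, i.e. work with the rank one projection onto $\C(x+y)$. The key identity to exploit is that $P+Q$ and the projection onto $\C(x+y)$ interact through the Jordan product in a controlled way: one computes $(\alpha P+\beta Q)\circ R$ and sees, via Proposition \ref{p1} for the mean transform of a rank one operator, that $\mathcal{M}((\alpha P+\beta Q)\circ R)$ involves $\alpha+\beta$ linearly. Applying $\Phi$ and using condition (\ref{c}) together with Lemma \ref{Lem21} (to expand $\Phi(\alpha P+\beta Q)=h(\alpha)\Phi(P)+h(\beta)\Phi(Q)$) and Lemma \ref{L5}(vii) (so that $\Phi(R)$ is again a rank one projection), I would get an equation in $\B(K)$ whose left side is a rank one operator built from $h(\alpha)$ and $h(\beta)$ via the mean transform formula, and whose right side is $\Phi$ applied to something proportional to $(\alpha+\beta)$ times a rank one projection, hence $h(\alpha+\beta)$ times a rank one projection. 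Matching the scalar coefficients yields $h(\alpha+\beta)=h(\alpha)+h(\beta)$, at least after dividing by a nonzero normalizing constant (the $\tfrac12$ and the inner-product factors in Proposition \ref{p1}, which are harmless since $x\perp y$ makes them explicit).

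\medskip

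Concretely, since $\Phi(P)$ and $\Phi(Q)$ are orthogonal rank one projections in $\B(K)$, say $\Phi(P)=e\otimes e$ and $\Phi(Q)=f\otimes f$ with $e\perp f$, the operator $h(\alpha)\Phi(P)+h(\beta)\Phi(Q)$ is the diagonal operator $\mathrm{diag}(h(\alpha),h(\beta))$ on $\C e\oplus\C f$, and $\Phi(R)$ is a rank one projection $g\otimes g$. Then $\mathcal{M}\big((h(\alpha)\Phi(P)+h(\beta)\Phi(Q))\circ (g\otimes g)\big)$ is computed from Proposition \ref{p1} and comes out proportional to a rank one operator whose ``trace-like'' scalar is a fixed multiple of $h(\alpha)+h(\beta)$ (because the $2\times 2$ diagonal matrix acting on the fixed vector $g$ produces $h(\alpha)\langle g,e\rangle e + h(\beta)\langle g,f\rangle f$, and the mean transform symmetrizes this). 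On the other hand the same expression equals $\Phi\big(\mathcal{M}((\alpha P+\beta Q)\circ R)\big)$, which by the identical computation on the $H$-side equals $\Phi$ of a fixed multiple of $(\alpha+\beta)$ times the rank one projection $R$, hence a fixed multiple of $h(\alpha+\beta)$ times $g\otimes g$. Comparing gives additivity.

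\medskip

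The main obstacle I anticipate is bookkeeping rather than conceptual: one must be careful that the scalar factors coming from Proposition \ref{p1} (the $\|y\|^2$ in the denominator, the $\langle x,y\rangle$ in the numerator) are the \emph{same} on both sides after applying $\Phi$, so that they genuinely cancel and do not secretly hide a term involving $h$ of those factors. This is why choosing $x\perp y$ and working with the normalized projection onto $\C(x+y)$ is important --- it makes all these inner products explicit constants ($\tfrac12$, $\tfrac{1}{\sqrt2}$, etc.) rather than images under $h$. A secondary point is to confirm that $h$ being a multiplicative bijection of $\C$ fixing $1$ already forces $h(q)=q$ for $q\in\Q_{>0}$ only after additivity is in hand; but once additivity is established, $h$ is an additive and multiplicative bijection of $\C$ with $h(1)=1$, i.e. a field automorphism, which is exactly the claim. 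I would close by remarking that such $h$ is then automatically either the identity or complex conjugation on the relevant subfields we will need, though that refinement belongs to the subsequent lemmas.
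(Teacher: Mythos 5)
Your overall strategy (use Lemma \ref{Lem21} together with condition (\ref{c}) on a well-chosen test operator to force $h(\alpha+\beta)=h(\alpha)+h(\beta)$) is the same as the paper's, but the test operator you choose does not work, and this is a genuine gap rather than bookkeeping. Write $S=\alpha P+\beta Q$ and let $R=u\otimes u$ with $u=(x+y)/\sqrt2$. Then $SR=(Su)\otimes u=\tfrac{1}{\sqrt2}(\alpha x+\beta y)\otimes u$ while $RS=u\otimes (S^*u)=u\otimes\tfrac{1}{\sqrt2}(\overline{\alpha}x+\overline{\beta}y)$, so $S\circ R$ is a rank \emph{two}, non-normal operator whenever $\alpha\neq\beta$. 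Proposition \ref{p1} only computes the mean transform of rank one operators, so neither $\mathcal{M}(S\circ R)$ nor $\mathcal{M}\bigl((h(\alpha)\Phi(P)+h(\beta)\Phi(Q))\circ\Phi(R)\bigr)$ is accessible by the formula you invoke; in particular $\mathcal{M}(S\circ R)$ is not a fixed multiple of $(\alpha+\beta)$ times $R$, and the claim that ``the mean transform symmetrizes'' the rank two Jordan product into a rank one operator is false. Worse, on the right-hand side of (\ref{c}) you must evaluate $\Phi\bigl(\mathcal{M}(S\circ R)\bigr)$, and at this stage $\Phi$ has only been computed on scalar multiples of quasi-normal operators and on combinations $\alpha P_1+\beta P_2$ of orthogonal projections; $\mathcal{M}(S\circ R)$ is neither, so the identity in which you propose to ``match scalar coefficients'' cannot even be written down. (A trace argument does not rescue this either, since there is no way to express $\mathrm{Tr}\,\Phi(\mathcal{M}(S\circ R))$ through $h$.)

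The paper circumvents exactly this difficulty by taking the test operator to be $A=x\otimes y+y\otimes x$ rather than a third projection. The point is that $A\circ P=A\circ Q=\tfrac12 A$, hence by bilinearity of the Jordan product $A\circ(\alpha P+\beta Q)=\tfrac{\alpha+\beta}{2}A$ stays on the line $\C A$; and since $A=f_1\otimes f_1-f_2\otimes f_2$ is a difference of orthogonal rank one projections, Lemma \ref{Lem21} gives $\Phi(A)=\Phi(f_1\otimes f_1)-\Phi(f_2\otimes f_2)$, which is self-adjoint, so $\mathcal{M}$ acts on scalar multiples of $\Phi(A)$ by homogeneity alone and both sides of (\ref{c}) collapse to scalars times $\Phi(A)$. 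That eigenvector-of-Jordan-multiplication device is what your proposal is missing; if you replace your $R$ by this $A$, the rest of your outline goes through.
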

\begin{proof}
Let $x, y$ be two unit and orthogonal vectors, and let $A=x\otimes y +y\otimes x$. First, note that  $A$ is self adjoint operator  of rank  two, and we have
$$A^2=x\otimes x+y\otimes y,$$
which is a non-trivial orthogonal projection. Since the dimensional of $H$ is greater than 3, therefore  the spectrum of $A$ is  $\sigma(A)=\{-1,0,1\}$. Hence we can find $f_1, f_2$ two unit and orthogonal vectors from 
$H$ ~~($\|f_1\|=\|f_2\|=1$ and $<f_1,f_2>=0$) \: such that 
\begin{equation*}
A= f_1 \otimes f_1-f_2 \otimes f_2.
\end{equation*}
In particular $$\Phi(A)=\Phi(f_1 \otimes f_1-f_2 \otimes f_2)=\Phi(f_1 \otimes f_1)-\Phi(f_2 \otimes f_2), $$
this implies that $\Phi(A)$ is self-adjoint as sum of two rank one projection.

Now, let us consider the rank one projections $P=x\otimes x, \; Q=y\otimes y$. For  $\alpha , \beta \in \C$, we denote by  $B=\alpha P + \beta Q $.
Clearly 
$$A\circ P=\frac{1}{2} A\; \; \text{ and }\;\;A\circ Q=\frac{1}{2} A,$$   by condition (\ref{c}) we get the following equalities : 
$$h(\frac{1}{2})\Phi(A)=\Phi(\frac{1}{2}A)=\mathcal{M}(\Phi(A)\circ \Phi(P))=\Phi(A)\circ \Phi(P).$$
And 

$$h(\frac{1}{2})\Phi(A)=\Phi(\frac{1}{2}A)=\mathcal{M}(\Phi(A)\circ \Phi(Q))=\Phi(A)\circ \Phi(Q).$$

Since  $A\circ B=\dfrac{\alpha+\beta}{2} A$ then, 
\begin{eqnarray*}
h(\dfrac{\alpha+\beta}{2})\Phi(A) &=&\Phi(\mathcal{M}(A\circ B))\\
&=&\mathcal{M}(\Phi(A)\circ \Phi(B)))\\
&=&\mathcal{M}(\Phi(A)\circ \Phi(\alpha P+ \beta Q)))\\
&=&\mathcal{M}(\Phi(A)\circ (h(\alpha)\Phi(P)+ h(\beta) \Phi(Q)))\\
&=&\mathcal{M}(h(\alpha)\Phi(A)\circ \Phi(P)+h(\beta)\Phi(A)\circ \Phi(Q))\\
&=&\mathcal{M}(h(\alpha) h(\frac{1}{2}) \Phi(A)+h(\beta)  h(\frac{1}{2}) \Phi(A))\\
&=& \mathcal{M}(\big({h(\frac{\alpha}{2})+h(\frac{\beta}{2})} \big)\Phi(A))\\
&=& (h(\frac{\alpha}{2})+h(\frac{\beta}{2})) \Phi(A).
\end{eqnarray*}
Consequently $$h(\frac{\alpha}{2})+h(\frac{\beta}{2})=h(\frac{\alpha+\beta}{2}),$$ and thus
 $$h(\alpha'+\beta')=h(\alpha')+h(\beta') ,\;\;\text{ for all}\; \alpha', \beta' \in \C.$$ 

\end{proof} 
The following result has been chowed by Uhlhorn, it gives a nice characterization of a bijective map $\Psi :P_1(H)\to P_1(H)$ which preserves the orthogonality. This result can be reformulate as :
\begin{thm}[Uhlhorn's Theorem]
 Let $ \Phi: P_1(H) \to P_1(K)$
be a bijective map, with $dim ~H \geq 3$. Assume that $\Phi$ satisfies the following property 
$$PQ=0\iff \Phi(P)\Phi(Q)=0\;\; (P, Q\in P_1(H)).$$
Then there exists a unitary or anti-unitary operator $U:H\to K$, such that $\Phi$ is of the form : 
\begin{equation}\label{eq121}
\Phi(P)=UPU^*\;\;\; \text{ for all }\;\; P\in P_1(H).
\end{equation}
\end{thm}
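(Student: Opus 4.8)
\medskip

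The plan is to recast the hypothesis in the language of projective geometry, invoke the fundamental theorem of projective geometry, and then feed the orthogonality condition in once more to pin the resulting semilinear map down to a scalar multiple of a unitary or anti-unitary operator. \emph{Step 1: from orthogonality to a collineation.} First, identify $P_1(H)$ with the set of one-dimensional subspaces (lines) of $H$, writing $p=\C x$ for the line carrying $x\otimes x$; then $PQ=0$ exactly when the two lines are orthogonal. For a family $S$ of lines put $S^{\perp}=\{\,r:\ r\perp s\ \text{for every}\ s\in S\,\}$. A short computation shows that $\{p,q\}^{\perp\perp}$ is precisely the set of lines contained in the span $p+q$, so the incidence relation ``$r\subseteq p+q$'' is expressible purely through $\perp$. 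Since $\Phi$ and $\Phi^{-1}$ both preserve $\perp$, the map $\Phi$ preserves this relation in both directions; that is, $\Phi$ carries the projective lines of $H$ bijectively onto those of $K$, so $\Phi$ is a collineation (and, more generally, induces an order isomorphism of the lattices of closed subspaces).

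\emph{Step 2: the fundamental theorem of projective geometry.} Since $\dim H\ge 3$, the projective space of $H$ has projective dimension at least $2$, so the fundamental theorem of projective geometry applies: there exist a field automorphism $\sigma$ of $\C$ and a $\sigma$-semilinear bijection $T:H\to K$, i.e. $T(\lambda x+\mu y)=\sigma(\lambda)Tx+\sigma(\mu)Ty$, such that $\Phi(\C x)=\C\,Tx$ for all $x\neq 0$; equivalently $\Phi(x\otimes x)=\|Tx\|^{-2}\,Tx\otimes Tx$ for each unit vector $x$.

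\emph{Step 3: forcing $T$ to be (anti)unitary.} If $e_1\perp e_2$ are unit vectors, then $Te_1\perp Te_2$, and for every $\lambda\in\C$ the vectors $e_1+\lambda e_2$ and $\overline{\lambda}\,e_1-e_2$ are orthogonal; applying $T$ and using $Te_1\perp Te_2$ gives $\overline{\sigma(\overline{\lambda})}\,\|Te_1\|^2=\sigma(\lambda)\,\|Te_2\|^2$. Taking $\lambda=1$ yields $\|Te_1\|=\|Te_2\|$, and then $\sigma(\overline{\lambda})=\overline{\sigma(\lambda)}$ for all $\lambda$. A field automorphism of $\C$ commuting with complex conjugation restricts to a field automorphism of $\R$, hence to the identity, and it permutes the roots of $X^{2}+1$; therefore $\sigma=\mathrm{id}$ or $\sigma$ is complex conjugation. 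Moreover $\|Tu\|$ is a constant $c>0$ over all unit vectors $u$ (link any two through a common orthogonal partner, using $\dim H\ge 3$), so $U:=c^{-1}T$ is an isometric bijection which is linear when $\sigma=\mathrm{id}$ and conjugate-linear otherwise — that is, $U$ is unitary or anti-unitary — and $\Phi(\C x)=\C\,Ux$. Since $\|Ux\|=\|x\|$, conjugation by $U$ sends $x\otimes x$ to $Ux\otimes Ux$, whence $\Phi(P)=UPU^{*}$ for every $P\in P_1(H)$, as claimed.

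\emph{Where the difficulty lies.} The two substantive points are Step 1 — that the orthocomplementation $S\mapsto S^{\perp}$ already recovers the subspace structure, so that preserving $\perp$ alone produces a collineation (this is exactly where $\dim H\ge 3$ enters, and where completeness of $H$ is needed for the full closed-subspace lattice) — and the short computation in Step 3 that rigidly excludes the pathological automorphisms of $\C$ and fixes the norm of $T$. Sandwiched between them is the classical, and fairly heavy, machinery of the fundamental theorem of projective geometry, which we are content to invoke.
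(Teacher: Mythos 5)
Your proposal is correct, but note that the paper does not prove this statement at all: it is quoted as Uhlhorn's theorem with a citation to Uhlhorn's 1962 article and used as a black box, so there is no in-paper argument to compare against. What you have written is the standard modern derivation, and the steps check out. The identification $\{p,q\}^{\perp\perp}=\{\,r : r\subseteq p+q\,\}$ is correct because $p+q$ is finite-dimensional, hence closed, so $(M^{\perp})^{\perp}=M$; this turns an orthogonality-preserving bijection into a collineation in both directions, and $\dim H\ge 3$ is exactly what lets you invoke the fundamental theorem of projective geometry (valid also in infinite dimension, in Baer's lattice-theoretic form) to get a $\sigma$-semilinear bijection $T$. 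Your Step 3 computation $\overline{\sigma(\overline{\lambda})}\,\|Te_1\|^2=\sigma(\lambda)\,\|Te_2\|^2$ is right, and it simultaneously forces $\sigma$ to commute with conjugation (hence $\sigma=\mathrm{id}$ or $\sigma=\overline{\phantom{z}}$, since the only field automorphism of $\R$ is the identity) and makes $\|Tu\|$ constant on unit vectors; connectedness of unit vectors through common orthogonal partners again uses $\dim H\ge 3$. The normalization then yields a surjective linear or conjugate-linear isometry, i.e.\ a unitary or anti-unitary $U$, and the verification that conjugation by $U$ sends $x\otimes x$ to $Ux\otimes Ux$ holds in the anti-unitary case as well. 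The only cosmetic caveat is that the heavy lifting is delegated to the fundamental theorem of projective geometry, which you acknowledge; given that the paper itself delegates the entire theorem to a reference, this is a more than adequate level of detail.
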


\begin{lem}The function $h$ is the identity or the complex conjugate. Moreover for every unit vector $y\in K$ and $x\in H$ such that $\Phi(x\otimes x)=y\otimes y$, and for every self-adjoint operator $A\in \B(H)$,  we have 
\begin{equation}\label{eq120}
 \langle \Phi(A)y, y\rangle =\langle A x, x\rangle.
\end{equation}
 
 \end{lem}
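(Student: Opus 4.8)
The plan is as follows. By the two preceding lemmas $h$ is a field automorphism of $\C$ with $h(0)=0$, $h(1)=1$, $h(-1)=-1$ and $h$ is additive. Once we know in addition that $h$ maps nonnegative reals to nonnegative reals, additivity forces $h$ to be order preserving on $\R$; an order preserving self‑map of $\R$ fixing the dense subfield $\Q$ is the identity on $\R$; and then $h(i)^2=h(i^2)=h(-1)=-1$ gives $h(i)=\pm i$, so with $h|_{\R}=\mathrm{id}$ and additivity $h(a+bi)=a\pm bi$, i.e. $h=\mathrm{id}_{\C}$ or $h$ is complex conjugation. Thus the proof splits into (a) showing $h([0,\infty))\subseteq[0,\infty)$, and (b) deducing $(\ref{eq120})$; part (b) will only use that $h$ fixes $\R$.

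For (a) the key observation is that the Jordan product of two rank‑one projections is self‑adjoint. Fix unit vectors $e,f\in H$ and put $P=e\otimes e$, $Q=f\otimes f$, so $P\circ Q=\tfrac12\big(\overline{\langle e,f\rangle}\,e\otimes f+\langle e,f\rangle\,f\otimes e\big)$ is self‑adjoint of rank at most two; by Lemma~\ref{L5}(vii) the images $\Phi(P),\Phi(Q)$ are rank‑one projections, hence $\Phi(P)\circ\Phi(Q)$ is self‑adjoint as well. A self‑adjoint operator is normal, hence quasi‑normal, hence a fixed point of $\mathcal{M}$, so $\mathcal{M}(P\circ Q)=P\circ Q$ and $\mathcal{M}(\Phi(P)\circ\Phi(Q))=\Phi(P)\circ\Phi(Q)$, and $(\ref{c})$ applied to $(A,B)=(P,Q)$ collapses to
\begin{equation*}
\Phi(P\circ Q)=\Phi(P)\circ\Phi(Q).
\end{equation*}
Both sides are finite rank, so I would apply the functional trace $Tr$, which (as in the proof of Lemma~\ref{L3}) is preserved by $\mathcal{M}$. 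Writing $\Phi(P)=y_P\otimes y_P$, $\Phi(Q)=y_Q\otimes y_Q$ with $\|y_P\|=\|y_Q\|=1$, the right‑hand side has trace $|\langle y_P,y_Q\rangle|^2\in[0,1]$. On the left, diagonalise the self‑adjoint finite‑rank operator as $P\circ Q=\mu_1P_1+\mu_2P_2$ with $\mu_1,\mu_2\in\R$ and $P_1,P_2$ mutually orthogonal rank‑one projections (one or both terms possibly absent); Lemma~\ref{Lem21} together with the homogeneity relation $\Phi(\alpha R)=h(\alpha)\Phi(R)$ for rank‑one projections $R$ gives $\Phi(P\circ Q)=h(\mu_1)\Phi(P_1)+h(\mu_2)\Phi(P_2)$, so using additivity of $h$,
\begin{equation*}
Tr\big(\Phi(P\circ Q)\big)=h(\mu_1)+h(\mu_2)=h(\mu_1+\mu_2)=h\big(Tr(P\circ Q)\big)=h\big(|\langle e,f\rangle|^2\big).
\end{equation*}
Comparing the two computations, $h\big(|\langle e,f\rangle|^2\big)=|\langle y_P,y_Q\rangle|^2\ge 0$. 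As $e,f$ vary, $|\langle e,f\rangle|^2$ runs over all of $[0,1)$, so $h([0,1))\subseteq[0,\infty)$; and since $h(nt)=n\,h(t)$ for $n\in\N$, every $t>0$ satisfies $h(t)=n\,h(t/n)\ge 0$ for $n>t$, hence $h([0,\infty))\subseteq[0,\infty)$. This settles (a) and, by the first paragraph, the first assertion of the lemma.

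For (b), fix a self‑adjoint $A\in\B(H)$, a unit vector $x\in H$, and the unit vector $y\in K$ with $\Phi(x\otimes x)=y\otimes y$. The operator $S_0:=A\circ(x\otimes x)$ is self‑adjoint of rank at most two, hence quasi‑normal, so $\mathcal{M}(S_0)=S_0$, and $(\ref{c})$ with $B=x\otimes x$ gives $\mathcal{M}\big(\Phi(A)\circ(y\otimes y)\big)=\Phi(S_0)$. Here $\Phi(A)\circ(y\otimes y)=\tfrac12\big((\Phi(A)y)\otimes y+y\otimes(\Phi(A)^{*}y)\big)$ is finite rank, so taking traces and using trace‑invariance of $\mathcal{M}$,
\begin{equation*}
\langle\Phi(A)y,y\rangle=Tr\big(\Phi(A)\circ(y\otimes y)\big)=Tr\big(\mathcal{M}(\Phi(A)\circ(y\otimes y))\big)=Tr(\Phi(S_0)).
\end{equation*}
Exactly as in (a) — diagonalising $S_0=\lambda_1P_1+\lambda_2P_2$ with $\lambda_i\in\R$ and orthogonal rank‑one projections $P_i$, and invoking Lemma~\ref{Lem21} — one gets $Tr(\Phi(S_0))=h(\lambda_1)+h(\lambda_2)=h(\lambda_1+\lambda_2)=h\big(Tr(S_0)\big)=h(\langle Ax,x\rangle)$, because $Tr\big(A\circ(x\otimes x)\big)=\tfrac12(\langle Ax,x\rangle+\langle x,Ax\rangle)=\langle Ax,x\rangle$. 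As $\langle Ax,x\rangle$ is real and $h$ fixes $\R$ by (a), this equals $\langle Ax,x\rangle$, which is $(\ref{eq120})$.

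The main obstacle is that every relation formulated purely with rank‑one projections, their orthogonal sums and scalar multiples is compatible with an arbitrary field automorphism $h$ — scalars simply do not interact with the mean transform in those configurations — so one must manufacture one relation that is sensitive to the order of $\R$. The device that does this is precisely the self‑adjointness of $P\circ Q$: it makes $\mathcal{M}$ act trivially on both sides of $(\ref{c})$ and turns an a priori transcendental constraint into the plain equality $h(|\langle e,f\rangle|^2)=|\langle y_P,y_Q\rangle|^2$, a value of $h$ against a manifestly nonnegative transition probability; after that, additivity and the fixing of $\Q$ finish the argument.
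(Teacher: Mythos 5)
Your proof is correct, and its second half --- deriving $\langle \Phi(A)y,y\rangle = h(\langle Ax,x\rangle)$ by writing the self-adjoint rank-$\leq 2$ operator $A\circ(x\otimes x)$ as a real combination of mutually orthogonal rank-one projections, applying Lemma \ref{Lem21} with additivity of $h$, and comparing traces via the trace-invariance of $\mathcal{M}$ on finite-rank operators --- is essentially the paper's argument verbatim. Where you genuinely diverge is in pinning down $h$. The paper extracts from that same trace identity the relation $h(W(A))=W(\Phi(A))$ for self-adjoint $A$, concludes that $h$ is bounded on every real segment and hence on every rectangle, and then invokes the Kallman--Simmons theorem to conclude that $h$ is the identity or conjugation. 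You instead specialise the identity to a pair of rank-one projections (which is exactly the case $A=Q=f\otimes f$ of the general formula): since $\Phi(P)\circ\Phi(Q)$ is visibly self-adjoint with trace $|\langle y_P,y_Q\rangle|^2\geq 0$, you get $h(|\langle e,f\rangle|^2)\geq 0$, hence $h([0,\infty))\subseteq[0,\infty)$ after the scaling step; additivity then makes $h$ monotone on $\R$, so $h$ is the identity on $\R$ because it fixes $\Q$, and $h(i)^2=h(-1)=-1$ finishes. Your route is self-contained and elementary where the paper outsources the last step to a cited planar-continua theorem; the cost is only the extra (small) computation with two projections and the bookkeeping from $[0,1]$ to $[0,\infty)$. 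Both arguments are sound, and your ordering (positivity first, then the identity \eqref{eq120} as a corollary of $h|_{\R}=\mathrm{id}$) is logically clean since the trace identity itself uses nothing about $h$ beyond additivity and Lemma \ref{Lem21}.
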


 \begin{proof}
 
 Let $A\in \B(H)$ be a self-adjoint operator, for an arbitrary unit vectors $x\in H$ and $y\in K$, such that $\Phi(x \circ x)=y\otimes y$, put $P=x\otimes x$, then  $A\circ P$ is also self-adjoint of rank less than 2. And So there exists $\alpha , \beta \in \C$ and two orthogonal projections $P_1$ and $P_2$ such that $A\circ P=\alpha P_1+\beta P_2$. In particular we have $$Tr(A\circ P)=\alpha + \beta=\langle Ax,x\rangle.$$
By the Lemma \ref{Lem21}, we have 
$$\Phi(A \circ P)=\Phi(\alpha P_1+\beta P_2)=h(\alpha)\Phi(P_1)+h(\beta)\Phi(P_2),$$
 so 
$$\Phi(A \circ P)=h(\alpha)+h(\beta) = h(\alpha +\beta)=h(\langle Ax,x\rangle).$$
  On the other hand,  
 $$Tr(\Phi(A)\circ \Phi(P)=\langle \Phi(A)y, y\rangle $$
and  by the assumption we have 
 $$\langle \Phi(A)y, y\rangle =Tr(\mathcal{M}(\Phi(A)\circ \Phi(P)))=Tr(\mathcal{M}(\Phi(A\circ P)))=Tr(\Phi(A\circ P))=h(\langle Ax,x\rangle).$$
 From the preceding argument, it follows that for any self-adjoint operator $A\in \B(H)$ we have 
\begin{equation}\label{equ_image}
 h(W(A))=W(\Phi(A)).
\end{equation}
Now, let $a,b\in \R$ such that $a<b$, and let $A\in\mathcal{B}(H)$ be a self-adjoint operator such that the numerical range  $W(A)=\{\langle Au,u\rangle \;\; : \; \|u\|=1\}=[a,b]$, for example we take $A=aP+bQ$ where $P, Q$ are two orthogonal projections. 
By the preceding (\ref{equ_image})  $$ h([a,b])=h(W(A))=W(\Phi(A)),$$
 is bounded, according to  the numerical range of an operator $T\in \B(K)$ is always  bounded. Which implies that the automorphism  $h : \C\to \C$ is bounded on all segment $[a,b]\subset \R$ and also bounded in all rectangle  set $\mathcal{R}=[a,b]+i[c,d]$. By Proposition 1.1 in \cite{kallman} $h$ is either the identity $h(z)=Id_{\C}(z)=z$, for all $z\in \C$ or the conjugate complex $h(z)=\bar{Id_{\C}}(z)=\bar{z}$ for all  $z\in \C$.  
 
 In particular, we conclude that for every self-adjoint operator $A\in B(H)$ and for every unit vector $y\in K$ and $x\in H$ such that $\Phi(x\otimes x)=y \otimes y$ we have 
   $$ \langle \Phi(A)y, y\rangle =\langle A x, x\rangle   \in \R.$$
 \end{proof}
 \begin{cor}  
 The maps $\Phi : \B(H)\to \B(K)$ preserves the set self adjoint operator. Moreover, the exist an unitary or anti-unitary operator $U : H\to K$ such that     
 $$\Phi(A)=UAU^*\;\; \; \text{for all} \;\; A\in \B_s (H).$$ 
 \end{cor}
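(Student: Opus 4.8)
The plan is to assemble the pointwise identity $\langle\Phi(A)y,y\rangle=\langle Ax,x\rangle$ (for $\Phi(x\otimes x)=y\otimes y$) into a global unitary/anti-unitary implementation on the real subspace $\B_s(H)$, via Uhlhorn's theorem. First I would record that $\Phi$ maps self-adjoint operators to self-adjoint operators: by the preceding lemma, for every self-adjoint $A$ the numerical range satisfies $W(\Phi(A))=h(W(A))=\overline{W(A)}$ or $W(A)$, which is a subset of $\R$; an operator whose numerical range lies in $\R$ is self-adjoint, so $\Phi(A)\in\B_s(K)$. Applying the same reasoning to $\Phi^{-1}$ (which satisfies the same hypothesis \eqref{c}) shows $\Phi$ restricts to a bijection $\B_s(H)\to\B_s(K)$.

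Next I would produce the operator $U$. By Lemma \ref{L5}(vii) and (iv), $\Phi$ restricts to a bijection $P_1(H)\to P_1(K)$ preserving orthogonality in both directions, so Uhlhorn's theorem (using $\dim H\geq 3$) yields a unitary or anti-unitary $U:H\to K$ with $\Phi(x\otimes x)=U(x\otimes x)U^*=(Ux)\otimes(Ux)$ for every unit vector $x$. Fix this $U$. Then for any unit $x\in H$ we may take $y=Ux$ in \eqref{eq120}, giving
\begin{equation*}
\langle\Phi(A)Ux,Ux\rangle=\langle Ax,x\rangle=\langle UAU^*(Ux),Ux\rangle
\end{equation*}
for every self-adjoint $A$ and every unit vector $x$, where in the anti-unitary case one uses $\langle Ux,Uz\rangle=\overline{\langle x,z\rangle}=\langle z,x\rangle$ so that $UAU^*$ is still self-adjoint and the quadratic form computation goes through. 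Hence the self-adjoint operators $\Phi(A)$ and $UAU^*$ have the same quadratic form on the unit sphere of $K$ (every unit vector of $K$ is $Ux$ for some unit $x\in H$ since $U$ is onto), and therefore $\Phi(A)=UAU^*$ for all $A\in\B_s(H)$.

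The main obstacle is the bookkeeping in the anti-unitary case: one must check that $A\mapsto UAU^*$ genuinely sends self-adjoint operators to self-adjoint operators and that the numerical-range / quadratic-form identities survive the complex conjugation introduced by $h$ and by the anti-linearity of $U$ — the two conjugations must cancel, which is exactly why $h$ being the conjugate is paired with $U$ being anti-unitary. Beyond that, the only subtlety is ensuring $\Phi^{-1}$ inherits condition \eqref{c}, which is immediate since \eqref{c} is symmetric in $\Phi$ and $\Phi^{-1}$ once one knows $\Phi$ is bijective.
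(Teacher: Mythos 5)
Your proof is correct and follows essentially the same route as the paper: Uhlhorn's theorem produces $U$ on the rank-one projections, and the identity $\langle\Phi(A)Ux,Ux\rangle=\langle Ax,x\rangle$ from the preceding lemma then forces $\Phi(A)=UAU^*$ for self-adjoint $A$. Your explicit numerical-range argument for the ``preserves self-adjointness'' assertion and your bookkeeping in the anti-unitary case are slightly more careful than the paper's proof, which leaves those details implicit.
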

 \begin{proof}
  The map $\Phi$ satisfies the conditions of Uhlhorn's, hence 
  $\Phi$ takes the form (\ref{eq121}) on the set $P_1(H)$ of rank one operators. 
  
 Let $A\in\B(H)$ be a self-adjoint operator, and let $x \in H$ be an arbitrary  unit vectors from $H$, we have such that $\Phi(x \otimes x)=Ux\otimes U x$, and thus by equation (\ref{eq120}) 
 \begin{eqnarray}
 \langle U^*\Phi(A)Ux, x\rangle &=&  \langle \Phi(A)Ux, Ux\rangle\\
  &=&  \langle Ax, x\rangle
 \end{eqnarray}
 Therefore $U^*\Phi(A)U =A$ and this completes the proof.
 \end{proof}
In the rest of the manuscript, we replace the map  $\Phi$ by the map $T\to U^*\Phi(T)U$ which satisfies the same conditions as $\Phi$ and noted it too by $\Phi$. Then we can suppose that $\Phi(A)=A$ for every self-adjoint operator $A\in \B(H)$. To finish the proof, it must to show that $\Phi$ is the identity on $\B(H)$.

 Let $A\in\B(H)$ and let $x\in H$ be an arbitrary unit vector from $H$. 
 Put $$T=2A\circ x\otimes x=Ax\otimes x+x\otimes A^*x,$$
 thus $T^*=x\otimes Ax+A^*x\otimes x$, in particular $T$ and $T^*$ are of rank two, moreover there image 
 $$\mathcal{R}(T)\subseteq span\{x, Ax\},\;\;\text{and }\;\; \mathcal{R}(T^*)\subseteq span\{x, A^*x\}.$$
 First we have $$M(\Phi(T))=2M(\Phi(A \circ x\otimes x))=2M(\Phi(A)\circ \Phi(x\otimes x))=2M(\Phi(A)\circ x\otimes x).$$
Hence 
\begin{equation}\label{ef1}
Tr(\Phi(T))=2\langle \Phi(A)x,x\rangle
\end{equation} 
 On the other hand, we have $$T\circ (x\otimes x-\frac{I}{2})=\langle Ax,x\rangle x\otimes x.$$
 By the assumption on $\Phi$ and $h$, we get that 
$$M(\Phi(T)\circ (x\otimes x-\frac{I}{2}))=h(\langle Ax,x \rangle) x\otimes x.$$
By Lemma \ref{Lem1}, 
\begin{equation}\label{ef2}
\Phi(T)\circ (x\otimes x-\frac{I}{2})=h(\langle Ax,x \rangle) x\otimes x.
\end{equation}
Therefore, from the equation (\ref{ef2}),
\begin{eqnarray*}
\Phi(T)& = & 2\Phi(T)\circ (x\otimes x)-2h(\langle Ax,x \rangle ) x\otimes x\\
& = & \Phi(T)x\otimes x +x\otimes\Phi(T)^*x-2h(\langle Ax,x \rangle) x\otimes x.
\end{eqnarray*}
Multiplying the last equation on the right by $x\otimes x$, then we get 
$$\Phi(T)x \otimes x = \Phi(T)x\otimes x +\langle \Phi(T)x, x\rangle x\otimes x-2h(\langle Ax,x \rangle ) x\otimes x.$$
This implies the following equation 
\begin{equation}\label{ef3}
\langle \Phi(T)x, x\rangle=2h(\langle Ax,x \rangle).
\end{equation}
Now, from equation  (\ref{ef2})  we get 
\begin{eqnarray*}
 h(<Ax,x>)  & = & Tr(\Phi(T)\circ (x\otimes x-\frac{I}{2})\\
 &=&Tr(\Phi(T)(x\otimes x-\frac{I}{2}))\\
 & = & \langle \Phi(T)x,x\rangle -\dfrac{Tr(\Phi(T))}{2}.
 \end{eqnarray*}
Then, from equations  (\ref{ef1}) we get 
\begin{equation}\label{ef4}
Tr(\Phi(T))=2\langle \Phi(T)x,x\rangle-2h(\langle Ax,x\rangle) =2h(\langle Ax,x\rangle).
\end{equation}
 From equation (\ref{ef1}) and (\ref{ef4}), we conclude that
 \begin{equation}\label{ef5}
 \langle \Phi(A)x,x\rangle=h(\langle Ax,x\rangle).
 \end{equation} 
 The last equality holds for every unit vector $x\in H$ and every $A\in \B(H)$.  Let us distinct the following two cases : 
 
\textbf{Case 1 : } If the function $h=Id_{\C}$, then from the equation (\ref{ef5}) we have, 
 \begin{equation}\label{ef6}
 \langle \Phi(A)x,x\rangle=\langle Ax,x\rangle,\text{ for all unit vector $x$ and $A\in \B(H)$}.
 \end{equation} 
 This  show that the map $\Phi$ is the identity. 
 
 \textbf{Case 2 : } Suppose that the function $h=\overline{Id}_{\C}$ is the complex conjugate. Hence from equation (\ref{ef5}), we get 
  \begin{equation}\label{ef6}
 \langle \Phi(A)x,x\rangle=\langle A^*x,x\rangle,\text{ for all unit vector $x$ and $A\in \B(H)$}.
 \end{equation} 
 Which implies that $\Phi(A)=A^*$ for all $A\in \B(H)$. 
 
 Now, let us consider  $A=x\otimes x'$ with $x, x'$  are  unit,  independent and non-orthogonal vectors  in $H$. Then $A^*=x'\otimes x$. By Proposition \ref{p1}, we have 
 $$ \mathcal{M}(\Phi(A))=\mathcal{M}(A^*)=\frac{1}{2}(x'+<x', x>x)\otimes x),$$
 and 
 $$\Phi(\mathcal{M}(A))=(\mathcal{M}(A))^*=\frac{1}{2}(x'\otimes (x+<x', x>x')),$$
which contradicts with the fact that $\Phi$ commute with the mean transform. 
 Hence, we conclude that the function $h$ must be the identity, and $\Phi$ must be linear and of the form 
 $$\Phi(T)=UTU^*, \;\;\text{ for all } T\in \B(H),$$ where $U:H\to K$ is a unitary or anti-unitary operator.
  \vspace*{0.8cm}
  
  {\bf  Acknowledgments.}

 I wish to thank Professor Mostafa Mbekhta  for the  interesting discussions as well as his useful suggestions for the improvement of this paper.

 \begin {thebibliography}{99}

\bibitem {alu}    {\sc A. Aluthge},
\emph{} { \textit {On p-hyponormal operators for $0 < p <1$}}, Integral Equations
Operator Theory 13 (1990), 307-315. 

\bibitem{bmg} {\sc F. Botelho ; L. Moln\'ar ; G. Nagy},
\emph{}{ \textit { Linear bijections on von Neumann factors commuting with $\lambda$-Aluthge transform}}, 
Bull. Lond. Math. Soc. 48 (2016), 74-84. 

\bibitem{cha} {\sc F. Chabbabi},
\emph{}{\textit {Product  commuting maps  with the  $\lambda$-Aluthge transform}}, J. Math. Anal. Appl. 449 (2017), 589-600.

\bibitem{fm} {\sc F. Chabbabi; M. Mbekhta},
\emph{}{\textit { Jordan product  commuting nonlinear maps  with the  $\lambda$-Aluthge transform}},  J. Math. Anal. Appl. 450 (2017),  293-313. 

\bibitem{ccm} {\sc F.Chabbabi, R. E. Curto,  M. Mbekhta}
\emph{}{\textit {The mean transform and the mean limit of an operator}}, Proc. AMS, 2019, 147(3), pp. 1119-1133.  

\bibitem{kp2} {\sc F. Chabbabi and M. Mbekhta},
\emph{}{ \textit { Polar decomposition, Aluthge and mean transforms, Linear and Multilinear Algebra and Function Spaces}}, Contemp. Math., Proc., Amer. Math. Soc., 2020, 750, 89-107. 

\bibitem{fur} {\sc T. Furuta},
\emph{}{\textit {Invitation to linear operators}}, Taylor  Francis,  London 2001.

\bibitem{LLL} {\sc S.H. Lee, W.Y. Lee and J. Yoon},
\emph{}{ \textit { The mean transform of bounded linear operators}}, J. Math. Anal. Appl. 410 (2014), 70-81.

\bibitem{kp3} {\sc  I. Jung, E. Ko, and C. Pearcy },
\emph{}{ \textit { Aluthge transform of operators}}, Integral Equations Operator Theory 37 (2000), 437-448.



\bibitem{jkp}  {\sc  I.B. Jung, E. Ko and S. Park}, \emph{}{\textit {Subscalarity of operator transforms}},  Math. Nachr. 288 
 (2015), 2042--2056.

\bibitem{kallman} {\sc R. Kallman, R. Simmons},
\emph{}{\textit { A theorem on planar continua and an application to automorphisms of the
field of complex numbers}}, Topology and its Applications 20 (1985), 251-255
\bibitem{kat} {\sc T. Kato},
\emph{}{\textit {Perturbation Theory for Linear Operators }}, Springer, Berlin 1980.

\bibitem{hyy}{\sc S. Lee, W. Lee, and J. Yoon}, 
\emph{}{ \textit {The mean transform of bounded linear operators}}, J. Math. Anal. Appl. 410 (2014), 70-81.

\bibitem{oku} {\sc K. Okubo}, 
\emph{}{ \textit {On weakly unitarily invariant norm and the  Aluthge  transformation}}, Linear Algebra Appl. 371 (2003),  369-375.

\bibitem{sem} {\sc P. Semrl}, 
\emph{}{ \textit {Linear mapping preserving square-zero matrices}}, Bul. Austrl. Math. Soc.  48 (1993),  365-370.

\bibitem{uu} {\sc U. Uhlhorn}, 
\emph{}{ \textit {Representation of symmetry transformations in quantum mechanics}}, Ark. Fysik 23 (1962), 307-340.

\bibitem{yam} {\sc T. Yamazaki},
\emph{}{\textit {An expression of the spectral radius via Aluthge tranformation}}, Proc. Amer. Math. Soc. 130 (2002), 1131-1137.

\end{thebibliography}

\end{document}